       \font\tenmsb=msbm10
       \font\sevenmsb=msbm7
       \font\fivemsb=msbm5
\let\amstexloaded@\relax\fi
       \def\spaces@{\space\space\space\space\space}
       \def\spaces@@{\spaces@\spaces@\spaces@\spaces@\spaces@}
       \def\space@.  {\futurelet\space@\relax}
       \def\Err@#1{\errhelp\defaulthelp@\errmessage{AmS-TeX error: #1}}
       \def\relaxnext@{\let\next\relax}
       \def\accentfam@{7}
       \def\noaccents@{\def\accentfam@{0}}
       \def\Cal{\relaxnext@\ifmmode\let\next\Cal@\else
       \def\next{\Err@{Use \string\Cal\space only in math mode}}\fi\next}
       \def\Cal@#1{{\Cal@@{#1}}}
       \def\Cal@@#1{\noaccents@\fam\tw@#1}
       \def\Bbb{\relaxnext@\ifmmode\let\next\Bbb@\else
       \def\next{\Err@{Use \string\Bbb\space only in math mode}}\fi\next}
       \def\Bbb@#1{{\Bbb@@{#1}}}
       \def\Bbb@@#1{\noaccents@\fam\msbfam#1}
\newtheorem{thm}{Theorem}[section]
\newtheorem{prop}[thm]{Proposition}
\newtheorem{lem}[thm]{Lemma}
\newtheorem{rem}[thm]{Remark}
\newtheorem{iteration lemma}[thm]{iteration Lemma}
\newtheorem{cor}[thm]{Corollary}
\newtheorem{defn}[thm]{Definition}
\newtheorem{eg}[thm]{Example}
\newtheorem*{acknowledgements*}{ACKNOWLEDGEMENTS}
\begin{document}

\setlength{\columnsep}{5pt}
\title{\bf New characterizations for core inverses in rings with involution}
\author{Sanzhang  Xu\footnote{ E-mail: xusanzhang5222@126.com},
\ Jianlong Chen\footnote{ Corresponding author. E-mail: jlchen@seu.edu.cn},
\ Xiaoxiang  Zhang\footnote{  E-mail: z990303@seu.edu.cn } \\
Department of  Mathematics, Southeast University \\  Nanjing 210096,  China }
     \date{}

\maketitle
\begin{quote}
{\textbf{}\small
 The core inverse for a complex matrix was introduced by Baksalary and Trenkler.
 Raki\'{c}, Din\v{c}i\'{c} and Djordjevi\'{c} generalized the core inverse of a complex matrix to the case of an element in a ring.
 They also proved that the core inverse of an element in a ring can be characterized by five equations
 and every core invertible element is group invertible.
 It is natural to ask when a group invertible element is core invertible,
 in this paper, we will answer this question. We will use three equations
 to characterize the core inverse of an element. That is, let $a, b\in R$,
 then $a\in R^{\tiny\textcircled{\tiny\#}}$ with $a^{\tiny\textcircled{\tiny\#}}=b$ if and only if
 $(ab)^{\ast}=ab$, $ba^{2}=a$ and $ab^{2}=b$.
Finally, we investigate the additive property of two core invertible elements. Moreover,
the formulae of the sum of two core invertible elements are presented.

\textbf {Keywords:} {\small Core inverse, dual core inverse, group inverse, $\{1,3\}$-inverse, $\{1,4\}$-inverse.}

}
\end{quote}

\section{ Introduction }\label{a}
The core inverse and the dual core inverse for a complex matrix were introduced by Baksalary and Trenkler \cite{BT}.
Let $A\in M_{n}(\mathbb{C})$, where $M_{n}(\mathbb{C})$ denotes the ring of all $n\times n$ complex matrices. A matrix $X\in M_{n}(\mathbb{C})$ is called a core inverse of $A$, if it satisfies
$AX=P_{A}$ and $\mathcal{R}(X)\subseteq \mathcal{R}(A)$,
where $\mathcal{R}(A)$ denotes the column space of $A$,
and $P_{A}$ is the orthogonal projector onto $\mathcal{R}(A)$.
And if such a matrix exists, then it is unique (and denoted by $A^{\tiny\textcircled{\tiny\#}}$).
Baksalary and Trenkler gave several
characterizations of the core inverse by using the decomposition of Hartwig and Spindelb\"{o}ck.

In \cite{RDD1} Raki\'{c}, Din\v{c}i\'{c} and Djordjevi\'{c} generalized the core inverse of a complex matrix
to the case of an operator in $\mathcal{L}(H)$, where $H$ is a Hilbert space and $\mathcal{L}(H)$ denotes the set of all
bounded linear operators from $H$ to $H$. They also proved that the core inverse of an operator $A\in\mathcal{L}(H)$ can be
determined by five operator equations.

In \cite{RDD} Raki\'{c}, Din\v{c}i\'{c} and Djordjevi\'{c} generalized the core inverse of a complex matrix to the case of an element in a ring.
They proved that a core invertible element is group invertible and for $a\in R$, the core inverse of $a$ is the unique element $x$ satisfying the following five equations:
$$axa=a,~~xax=x,~~(ax)^{\ast}=ax,~~xa^{2}=a,~~ax^{2}=x.$$

In \cite[Theorem 2.1]{WL}, Wang and Liu proved that if $A\in \mathbb{C}^{CM}_{n}$, where $\mathbb{C}^{CM}_{n}=\{A\in M_{n}(\mathbb{C})\mid r(A)=r(A^{2})\}$, then the core inverse of $A$
is the unique matrix $X\in M_{n}(\mathbb{C})$ satisfying the following three equations:
$$~AXA=A,~~~~AX^{2}=X,~~~~(AX)^{\ast}=AX.$$

Motivated by \cite{RDD} and \cite{WL}, we answer the question when a group invertible element is core invertible
and prove that the core inverse of an element in a ring can be characterized by three equations.
We also extend the result \cite[Theorem 2.1]{WL} to the general ring. Let $a,b\in R$,
in Theorem \ref{coreinverse2}, we show that $a\in R^{\tiny{\textcircled{\tiny\#}}}$
if and only if $a\in R^\#\cap R^{\{1,3\}}$.
In Theorem \ref{coreinverse1}, we show that $a\in R^{\tiny\textcircled{\tiny\#}}$ with $a^{\tiny\textcircled{\tiny\#}}=b$
if and only if $(ab)^{\ast}=ab$, $ba^{2}=a$ and $ab^{2}=b$.
In Theorem \ref{Thr-conditions-cor1}, we show that
if $Ra=Ra^{2}$, then $a\in R^{\tiny\textcircled{\tiny\#}}$ with core inverse $b$
if and only if $aba=a$, $(ab)^{\ast}=ab$ and $ab^{2}=b$.

The problem of the Moore-Penrose inverse of the sum of two Moore-Penrose invertible elements in complex matrix ring was first considered by Penrose in \cite{P}.
Many scholars focus on the additive problem of two generalized invertible elements,
such as \cite{BXT,CG,CD,DW,PH,ZCDW}. The problem of the Drazin inverse of the sum of two Drazin invertible elements in a
ring was first considered by Drazin in \cite{D}. He proved that for
two group invertible elements $a,b\in R$ satisfying $ab=0=ba$, then $a+b$ is group invertible.
Hartwig, Wang and Wei in \cite{HWW} gave a generalization of above result in the complex matrix case in terms of $ab=0$.
Chen, Zhuang and Wei in \cite{CZW} also gave a generalization of above result in the morphism case in terms of $ab=0$.
In section 4, we will show that if $a$ and $b$ are core invertible and satisfy $ab=0$ and $a^{\ast}b=0$, then $a+b$ is core invertible and $(a+b)^{\tiny{\textcircled{\tiny\#}}}=b^{\pi}a^{\tiny{\textcircled{\tiny\#}}}+b^{\tiny{\textcircled{\tiny\#}}}$, where $b^{\pi}=1-b^{\tiny{\textcircled{\tiny\#}}}b$.

For the convenience of the reader, some fundamental concepts are given as follows.
An element $b\in R$ is an inner inverse of $a\in R$ if $aba=a$ holds. The set of all inner inverses of $a$ will be denoted by $a\{1\}$.
An element $a\in R$ is said to be group invertible
if there exists $b\in R$ such that the following equations hold:
$$aba=a, \quad bab=b, \quad ab=ba.$$
The element $b$ which satisfies the above equations is called a group inverse of $a$.
If such an element $b$ exists, it is unique and denoted by $a^\#$. The set of all group invertible elements will be denoted by $R^\#$.

Let $R$ be a $*$-ring, that is a ring with an involution $a\mapsto a^*$
satisfying $(a^*)^*=a$, $(ab)^*=b^*a^*$ and $(a+b)^*=a^*+b^*$.

An element $\tilde{a}\in R$  is called a $\{1,3\}$-inverse of $a$ if $a\tilde{a}a=a$ and $(a\tilde{a})^{\ast}=a\tilde{a}$. The set of all $\{1,3\}$-invertible elements will be denoted by $R^{\{1,3\}}$. Similarly, an element $\hat{a}\in R$  is called a $\{1,4\}$-inverse of $a$ if $a\hat{a}a=a$ and $(\hat{a}a)^{\ast}=\hat{a}a$. The set of all $\{1,4\}$-invertible elements will be denoted by $R^{\{1,4\}}$.

We will also use the following notations: $aR=\{ax\mid x\in R\}$, $Ra=\{xa\mid x\in R\}$, $^{\circ}a=\{x\in R\mid xa=0\}$, $a^{\circ}=\{x\in R\mid ax=0\}$.

\section{ When a group invertible element is core invertible }\label{a}

In this section, some characterizations of the existence of a core invertible element in rings are obtained. Let us start this section with some preliminaries.

\begin{defn} \cite{RDD}
Let $a,x\in R$, if
$$axa=a,~xR=aR,~Rx=Ra^{\ast},$$
then $x$ is called a core inverse of $a$ and if such an element $x$ exists, then it is unique and denoted by $a^{\tiny{\textcircled{\tiny\#}}}$. The set of all core invertible elements in $R$ will be denoted by $R^{\tiny{\textcircled{\tiny\#}}}$.
\end{defn}

\begin{defn} \cite{RDD}
Let $a,x\in R$, if
$$axa=a,~xR=a^{\ast}R,~Rx=Ra,$$
then $x$ is called a dual core inverse of $a$ and if such an element $x$ exists, then it is unique and denoted by $a_{\tiny{\textcircled{\tiny\#}}}$. The set of all dual core invertible elements in $R$ will be denoted by $R_{\tiny{\textcircled{\tiny\#}}}$.
\end{defn}

\begin{lem} \cite[Theorem 2.14]{RDD} \label{five-equations-yy}
Let $a\in R$, then $a\in R^{\tiny\textcircled{\tiny\#}}$ with core inverse $x$ if and only if
$$axa=a,~~xax=x,~~(ax)^{\ast}=ax,~~xa^{2}=a,~~ax^{2}=x.$$
\end{lem}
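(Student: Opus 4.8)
The plan is to prove the stated five-equation description directly against the defining conditions of the core inverse, namely $axa=a$, $xR=aR$, $Rx=Ra^{\ast}$. Thus I will show that, for $a,x\in R$, these three range-type conditions hold if and only if the five equations $axa=a$, $xax=x$, $(ax)^{\ast}=ax$, $xa^{2}=a$, $ax^{2}=x$ hold. Uniqueness of $x$ then transfers automatically, since the defining conditions already determine $x$ uniquely.

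For the direction from the definition to the five equations, I would first record two consequences of the range conditions: from $axa=a$ one gets $axR=aR$ (because $ar=ax(ar)\in axR$), and applying the involution to $Rx=Ra^{\ast}$ gives $x^{\ast}R=aR$. The main obstacle is to establish $(ax)^{\ast}=ax$. I would handle this by setting $e=ax$, noting that $e$ is idempotent since $e^{2}=axax=(axa)x=ax=e$, and using $x^{\ast}\in aR=axR=eR$ to write $x^{\ast}=es$; then $ex^{\ast}=e^{2}s=es=x^{\ast}$, so that $ee^{\ast}=e(x^{\ast}a^{\ast})=(ex^{\ast})a^{\ast}=x^{\ast}a^{\ast}=e^{\ast}$. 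Applying the involution to $ee^{\ast}=e^{\ast}$ gives $ee^{\ast}=e$, whence $e=e^{\ast}$, i.e. $(ax)^{\ast}=ax$. Once self-adjointness is in hand the remaining equations fall out cleanly: applying the involution to $axa=a$ and using $x^{\ast}a^{\ast}=ax$ yields $a^{\ast}ax=a^{\ast}$; writing $x=pa^{\ast}$ (legitimate since $x\in Rx=Ra^{\ast}$) then gives $xax=p\,a^{\ast}ax=pa^{\ast}=x$; writing $a=xv$ (since $a\in aR=xR$) gives $xa^{2}=(xax)v=xv=a$; and $ax^{2}=x$ follows from $x\in axR$ by one more idempotent cancellation.

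For the converse, assume the five equations. Then $xR=aR$ is immediate from $x=ax^{2}\in aR$ and $a=xa^{2}\in xR$. For $Rx=Ra^{\ast}$ I would invoke self-adjointness twice: from $x=xax=x(ax)=x(ax)^{\ast}=xx^{\ast}a^{\ast}$ one reads off $x\in Ra^{\ast}$, while applying the involution to $axa=a$ and substituting $x^{\ast}a^{\ast}=ax$ gives $a^{\ast}=a^{\ast}x^{\ast}a^{\ast}=a^{\ast}ax\in Rx$; hence $Rx=Ra^{\ast}$. Together with $axa=a$ this is exactly the definition, so $x=a^{\tiny\textcircled{\tiny\#}}$.

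I expect the only genuinely delicate point to be the self-adjointness argument, where the passage from $ee^{\ast}=e^{\ast}$ to $e=e^{\ast}$ does the real work; every other step is a short substitution using one of the memberships $x\in aR$, $a\in xR$, $x\in Ra^{\ast}$, $a^{\ast}\in Rx$ already encoded in the range conditions. In particular I would not need any additional regularity hypotheses beyond $R$ being a unital $\ast$-ring.
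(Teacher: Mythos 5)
The paper offers no proof of this lemma: it is quoted as a known result from Raki\'{c}, Din\v{c}i\'{c} and Djordjevi\'{c} \cite[Theorem 2.14]{RDD}, so there is nothing internal to compare your argument against. What you give is a correct, self-contained derivation of the five-equation characterization directly from the defining conditions $axa=a$, $xR=aR$, $Rx=Ra^{\ast}$, valid in any unital $\ast$-ring. The delicate part is exactly where you say it is: passing from the range conditions to $(ax)^{\ast}=ax$. Your chain works — $Rx=Ra^{\ast}$ dualizes to $x^{\ast}R=aR$, $axa=a$ gives $aR=axR=eR$ for the idempotent $e=ax$, hence $x^{\ast}=es$ and $ex^{\ast}=x^{\ast}$, so $ee^{\ast}=(ex^{\ast})a^{\ast}=x^{\ast}a^{\ast}=e^{\ast}$, and applying the involution to $ee^{\ast}=e^{\ast}$ yields $ee^{\ast}=e$, whence $e=e^{\ast}$. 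The remaining substitutions ($a^{\ast}ax=a^{\ast}$ giving $xax=x$ via $x\in Ra^{\ast}$; $a\in xR$ giving $xa^{2}=a$; $x\in axR$ giving $ax^{2}=x$) and the converse direction are all routine and correct. The only small caveat is that you defer uniqueness to the phrase "if such an element exists, then it is unique" in the definition, which is itself a (cited) theorem of \cite{RDD} rather than a triviality; if you want the lemma fully self-contained, note that uniqueness also drops out of the five equations: if $x$ and $y$ both satisfy them, then $ay=axay=(ay\cdot ax)^{\ast}=ayax=ax$ using $axa=a$ and the two symmetry conditions, and then $x=xax=xay=xa\cdot ay^{2}=(xa^{2})y^{2}=ay^{2}=y$.
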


\begin{lem} \cite{RDD} \label{five-equations}
Let $a\in R$, we have:\\
$(1)$ If $a\in R^{\tiny{\textcircled{\tiny\#}}}$, then $a\in R^\#$ and $a^\#=(a^{\tiny{\textcircled{\tiny\#}}})^{2}a$;\\
$(2)$ $a\in R^\#$ if and only if there exists an idempotent $q\in R$ such that $qR=aR$ and $Rq=Ra.$
\end{lem}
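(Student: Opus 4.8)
The plan is to handle the two parts independently, deriving (1) by a direct computation from the five-equation description of the core inverse in Lemma \ref{five-equations-yy}, and proving (2) by the classical idempotent argument whose converse is the only genuinely delicate point. For part (1), write $x=a^{\tiny{\textcircled{\tiny\#}}}$, so that by Lemma \ref{five-equations-yy} we have $axa=a$, $xax=x$, $(ax)^{\ast}=ax$, $xa^{2}=a$ and $ax^{2}=x$. I would take $b=x^{2}a$ as the candidate group inverse and verify the three defining equations. The crux is that $b$ collapses to $xa$ from both sides: $ax^{2}=x$ gives $ab=ax^{2}a=xa$, while $xa^{2}=a$ gives $ba=x^{2}a^{2}=x(xa^{2})=xa$, so $ab=ba=xa$ and commutativity is automatic. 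Then $aba=(xa)a=xa^{2}=a$ and $bab=b(ab)=x^{2}a\cdot xa=x^{2}(axa)=x^{2}a=b$, using $axa=a$. This simultaneously shows $a\in R^{\#}$ and $a^{\#}=b=(a^{\tiny{\textcircled{\tiny\#}}})^{2}a$.

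For the forward implication of part (2), assume $a\in R^{\#}$ and set $q=aa^{\#}=a^{\#}a$; the commutativity of the group inverse makes this well defined. Then $q^{2}=q$ and, most importantly, $aq=qa=a$. From $a=qa=aq$ I get the inclusions $aR\subseteq qR$ and $Ra\subseteq Rq$, and from $q=aa^{\#}=a^{\#}a$ I get the reverse inclusions $qR\subseteq aR$ and $Rq\subseteq Ra$, which together yield $qR=aR$ and $Rq=Ra$.

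For the converse, suppose $q^{2}=q$, $qR=aR$ and $Rq=Ra$. Since $R$ is unital, $q\in qR=aR$ and $q\in Rq=Ra$, so $q=at_{0}=s_{0}a$ for some $t_{0},s_{0}\in R$; likewise $a\in qR$ forces $qa=a$ and $a\in Rq$ forces $aq=a$. I would then work inside the corner ring $qRq$, whose identity is $q$, by setting $t=qt_{0}q$ and $s=qs_{0}q$; short computations give $at=q$ and $sa=q$, so $a$ has a right inverse and a left inverse in $qRq$. The decisive step, which I expect to be the main obstacle, is to show that these coincide: $s=sq=s(at)=(sa)t=qt=t$. Putting $b=s=t$ then gives $ab=ba=q$, whence $aba=qa=a$ and $bab=qb=b$, so $b$ is a group inverse of $a$ and $a\in R^{\#}$.
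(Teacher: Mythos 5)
Your proof is correct in every step. Note, however, that the paper offers no proof of this lemma at all: it is imported verbatim from \cite{RDD} as a known result, so there is no in-paper argument to compare yours against. Your part (1) is the standard verification — taking $b=x^{2}a$ and using $ax^{2}=x$ and $xa^{2}=a$ to collapse $ab$ and $ba$ to the common value $xa$, then checking $aba=a$ and $bab=b$ — and your part (2) is the classical characterization of group invertibility by the idempotent $q=aa^{\#}=a^{\#}a$ in one direction and, in the other, the corner-ring argument producing a two-sided inverse of $a$ relative to the unit $q$ of $qRq$; both match the arguments in \cite{RDD}. The only point worth flagging is that your converse in part (2) silently uses that $R$ is unital (to get $q\in qR$ and $a\in aR$), which is consistent with the paper's standing assumptions, so nothing is missing.
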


\begin{lem} \cite[Theorem 2]{HC} \label{chen}
Let $a\in R$, then $a\in R^{\{1,3\}}$ if and only if there exists unique $p\in R$ such that $p^{2}=p=p^{\ast}$ and $aR=pR$.
\end{lem}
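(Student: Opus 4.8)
The plan is to prove the two implications of the biconditional separately, treating the uniqueness claim as part of the forward direction, since the natural candidate projection is produced there.

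For the forward direction, suppose $a$ has a $\{1,3\}$-inverse $\tilde{a}$, so that $a\tilde{a}a=a$ and $(a\tilde{a})^{\ast}=a\tilde{a}$. The natural candidate is $p=a\tilde{a}$. First I would check that $p$ is a projection: self-adjointness $p^{\ast}=p$ is immediate from $(a\tilde{a})^{\ast}=a\tilde{a}$, while idempotency follows from $p^{2}=a\tilde{a}a\tilde{a}=(a\tilde{a}a)\tilde{a}=a\tilde{a}=p$ using the inner-inverse equation. To see $aR=pR$, I would note that $pR=a\tilde{a}R\subseteq aR$, whereas the identity $a=a\tilde{a}a=pa$ shows $a\in pR$ and hence $aR\subseteq pR$.

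For uniqueness, suppose $p$ and $q$ are both projections with $pR=aR=qR$. From $p\in qR$, writing $p=qr$ and multiplying on the left by $q$ with $q^{2}=q$ gives $qp=p$, and symmetrically $pq=q$. The key maneuver is to apply the involution to $qp=p$: since $p$ and $q$ are self-adjoint, $(qp)^{\ast}=p^{\ast}q^{\ast}=pq$, while $(qp)^{\ast}=p^{\ast}=p$, so $pq=p$; comparing this with $pq=q$ forces $p=q$. This adjoint step, where self-adjointness of the projections is genuinely used, is the main (though short) obstacle; without it the two idempotents need not coincide, only generate the same right ideal.

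For the backward direction, suppose $p^{2}=p=p^{\ast}$ and $aR=pR$. From $a\in pR$ I would derive $pa=a$, and from $p\in aR$ write $p=as$ for some $s\in R$. Then $s$ is the desired $\{1,3\}$-inverse: the computation $asa=pa=a$ gives the inner-inverse equation, and $(as)^{\ast}=p^{\ast}=p=as$ gives the Hermitian condition $(as)^{\ast}=as$. Hence $a\in R^{\{1,3\}}$, completing the proof.
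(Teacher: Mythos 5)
Your argument is correct and complete: the forward direction with $p=a\tilde{a}$, the involution trick $pq=(qp)^{\ast}=p^{\ast}=p$ combined with $pq=q$ for uniqueness, and the backward direction extracting $s$ from $p=as$ are all sound. Note that the paper itself gives no proof of this lemma --- it is quoted from Han and Chen \cite{HC} --- so there is nothing internal to compare against; your proof is the standard one and fills that gap correctly.
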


In \cite[Theorem 2.14]{RDD}, Raki\'{c}, Din\v{c}i\'{c} and Djordjevi\'{c} proved that $a\in R^{\tiny\textcircled{\tiny\#}}$ if and only if there exists  $p^{2}=p=p^{\ast}$ and $q^{2}=q$ such that $aR=pR$ and $qR=aR$ and $Rq=Ra.$ Thus by Lemma \ref{five-equations} and Lemma \ref{chen}, we have the following theorem. For the convenience of the reader, we give another method to prove this result.

\begin{thm} \label{coreinverse2}
Let $a\in R$, then $a\in R^{\tiny{\textcircled{\tiny\#}}}$ if and only if $a\in R^\#\cap R^{\{1,3\}}$.
In this case, $a^{\tiny{\textcircled{\tiny\#}}}=a^\#aa^{(1,3)}.$
\end{thm}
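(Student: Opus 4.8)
The plan is to route everything through the five-equation characterization of the core inverse in Lemma \ref{five-equations-yy}, which serves as the natural dictionary between $R^{\tiny{\textcircled{\tiny\#}}}$ on one side and $R^\#\cap R^{\{1,3\}}$ on the other.

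For the forward implication, suppose $a\in R^{\tiny{\textcircled{\tiny\#}}}$ with core inverse $x=a^{\tiny{\textcircled{\tiny\#}}}$. Lemma \ref{five-equations}(1) gives $a\in R^\#$ at once. Among the five equations satisfied by $x$, the two relations $axa=a$ and $(ax)^{\ast}=ax$ are exactly the defining conditions of a $\{1,3\}$-inverse, so $x$ itself witnesses $a\in R^{\{1,3\}}$. Thus this direction is essentially free.

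For the converse, assume $a\in R^\#\cap R^{\{1,3\}}$, fix a group inverse $a^\#$ and a $\{1,3\}$-inverse $a^{(1,3)}$, and set $b=a^\#aa^{(1,3)}$. I would verify that $b$ satisfies all five equations of Lemma \ref{five-equations-yy}; by the uniqueness of the core inverse this simultaneously establishes $a\in R^{\tiny{\textcircled{\tiny\#}}}$ and the formula $a^{\tiny{\textcircled{\tiny\#}}}=a^\#aa^{(1,3)}$. The computation hinges on two simplifications. Using $aa^\#a=a$ one finds $ab=aa^{(1,3)}$, which is a symmetric idempotent: it equals its adjoint by the $\{1,3\}$-property and squares to itself because $aa^{(1,3)}a=a$. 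This immediately delivers $(ab)^{\ast}=ab$ and $aba=(aa^{(1,3)})a=a$. Dually, using $aa^{(1,3)}a=a$ one finds $ba=a^\#a$, the group idempotent; since $a^\#a$ fixes both $a$ and $b$ on the left (using $aa^\#a=a$ and $aa^\#=a^\#a$), this gives $ba^2=(a^\#a)a=a$ and $bab=(a^\#a)b=b$.

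The only equation needing slightly more attention is $ab^2=b$. Here I would write $ab^2=(ab)b=aa^{(1,3)}\cdot a^\#aa^{(1,3)}$ and collapse the middle using the identity $aa^{(1,3)}a^\#a=a^\#a$, which follows from $aa^\#=a^\#a$ together with $aa^{(1,3)}a=a$; this leaves $a^\#aa^{(1,3)}=b$, as required. This is the step I expect to be the main obstacle, modest as it is, because it is the only place where the $\{1,3\}$-idempotent $aa^{(1,3)}$ and the group idempotent $a^\#a$ must be commuted past one another in the correct order. With all five equations in hand, Lemma \ref{five-equations-yy} closes the argument.
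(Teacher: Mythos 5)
Your proposal is correct, and it follows the paper's proof in all essentials: the forward direction is extracted from Lemmas \ref{five-equations} and \ref{five-equations-yy} exactly as in the paper, and the converse exhibits the same candidate $a^\#aa^{(1,3)}$. The only (harmless) difference is the final certification step: you verify the five equations of Lemma \ref{five-equations-yy} by direct computation (all of your identities, including $aa^{(1,3)}a^\#a=a^\#a$ via $a^\#a=a(a^\#)^2a$, check out), whereas the paper verifies the definitional conditions $aya=a$, $yR=aR$, $Ry=Ra^{\ast}$.
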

\begin{proof} Suppose $a\in R^{\tiny{\textcircled{\tiny\#}}}$, then $a^\#=(a^{\tiny{\textcircled{\tiny\#}}})^{2}a$ by  Lemma \ref{five-equations} and $a\in R^{\{1,3\}}$ by Lemma \ref{five-equations-yy}. Conversely, suppose $a\in R^\#\cap R^{\{1,3\}}$, then $aa^{(1,3)}a=a$ and $(aa^{(1,3)})^{\ast}=aa^{(1,3)}$, we have
$$a=aa^{(1,3)}a=(aa^{(1,3)})^{\ast}a=(a^{(1,3)})^{\ast}a^{\ast}a.$$
Let $y=a^\#aa^{(1,3)}$, then
$aya=aa^\#aa^{(1,3)}a=aa^\#a=a$.
Since $y=a^\#aa^{(1,3)}=a(a^\#)^{2}aa^{(1,3)}$
and $a=a^\#a^{2}=a^\#(a^{(1,3)})^{\ast}a^{\ast}a^{2}=ya^{2}$, we get
$yR=aR.$
We also have $$a^{\ast}=a^{\ast}aa^{(1,3)}=a^{\ast}aa^\#aa^{(1,3)}=a^{\ast}ay.$$ So
$Ry=Ra^{\ast}.$ Thus $a\in R^{\tiny{\textcircled{\tiny\#}}}$ by the definition of the core inverse.
\end{proof}

\begin{cor}
Let $a\in R$, then the following conditions are equivalent:\\
$(1)$ $a\in R^{\tiny{\textcircled{\tiny\#}}}$;\\
$(2)$ $a\in R^\#$ and there exists $x\in R$ such that $(ax)^{\ast}=ax$ and $xa^{2}=a$;\\
$(3)$ $a\in R^\#$ and there exists $x\in R$ such that $(ax)^{\ast}=ax$ and $xa=a^\#a$.
\end{cor}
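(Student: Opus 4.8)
The plan is to prove the cycle $(1)\Rightarrow(2)\Rightarrow(3)\Rightarrow(1)$, using Theorem \ref{coreinverse2} as the engine so that I never have to verify all the defining equations of a core inverse directly. The only background facts I will need are the standard group-inverse identities: if $a\in R^\#$ and I write $e=aa^\#=a^\#a$, then $e$ is idempotent and $ea=ae=a$, whence $a^2a^\#=a$.

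The two forward implications are essentially free. For $(1)\Rightarrow(2)$ I would take $x=a^{\tiny\textcircled{\tiny\#}}$: Lemma \ref{five-equations} gives $a\in R^\#$, and Lemma \ref{five-equations-yy} already lists $(ax)^\ast=ax$ and $xa^2=a$ among the defining equations, so (2) holds verbatim. For $(2)\Rightarrow(3)$ the only point to check is the identity $xa=a^\#a$; I would obtain it by multiplying the hypothesis $xa^2=a$ on the right by $a^\#$ and using $a^2a^\#=a$ to collapse the left-hand side to $xa$, while $(ax)^\ast=ax$ is inherited unchanged.

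The substance of the corollary lies in $(3)\Rightarrow(1)$, and the key realization is that condition (3) is nothing but a disguised $\{1,3\}$-inverse statement. From $xa=a^\#a$ one gets $axa=a(a^\#a)=aa^\#a=a$, and $(ax)^\ast=ax$ is assumed; hence $x$ is a $\{1,3\}$-inverse of $a$, so $a\in R^{\{1,3\}}$. Since $a\in R^\#$ is also part of the hypothesis, Theorem \ref{coreinverse2} applies and delivers $a\in R^{\tiny\textcircled{\tiny\#}}$ (with $a^{\tiny\textcircled{\tiny\#}}=a^\#ax$), closing the cycle.

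I expect no serious obstacle here: the one idea that makes the argument short is seeing that (3) packages exactly the data of an inner inverse together with a Hermitian $ax$, which is precisely what Theorem \ref{coreinverse2} requires on top of group invertibility. The remaining manipulations are one-line reductions using the idempotent $e=aa^\#$, and I would order the implications as above so that each step calls on only the single fact it needs.
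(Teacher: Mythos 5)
Your proposal is correct and follows the paper's own proof essentially line for line: the forward implications are read off from Lemmas \ref{five-equations-yy} and \ref{five-equations}, the step $(2)\Rightarrow(3)$ is obtained by right-multiplying $xa^{2}=a$ by $a^\#$, and $(3)\Rightarrow(1)$ recognizes $x$ as a $\{1,3\}$-inverse and invokes Theorem \ref{coreinverse2}. No discrepancies to report.
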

\begin{proof}
$(1)\Rightarrow(2)$ It is clear by Lemma \ref{five-equations-yy} and Lemma \ref{five-equations}.

$(2)\Rightarrow(3)$  It is sufficient to prove that $xa^{2}=a$ implies $xa=a^\#a$. It is easy to see that by $xa=xa^{2}a^\#=aa^\#=a^\#a.$

$(3)\Rightarrow(1)$  Suppose $a\in R^\#$ and there exists $x\in R$ such that $(ax)^{\ast}=ax$ and $xa=a^\#a$.
Then $axa=aa^\#a=a$, that is $a\in R^{\{1,3\}}$. Thus by the hypothesis $a\in R^\#$ and Theorem \ref{coreinverse2},
we have $a\in R^{\tiny{\textcircled{\tiny\#}}}$.
\end{proof}

There exists a corresponding result for the dual core inverse.

\begin{thm} \label{dualcoreinverse2}
Let $a\in R$, then $a\in R_{\tiny{\textcircled{\tiny\#}}}$ if and only if $a\in R^\#\cap R^{\{1,4\}}$.
In this case, $a_{\tiny{\textcircled{\tiny\#}}}=a^{(1,4)}aa^\#.$
\end{thm}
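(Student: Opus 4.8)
The plan is to mirror the proof of Theorem \ref{coreinverse2}, exploiting the duality between the core inverse (governed by $aR=pR$ with $p=p^\ast$) and the dual core inverse (governed by $Ra=Rp$ with $p=p^\ast$), which is precisely the passage from $\{1,3\}$-inverses to $\{1,4\}$-inverses. The cleanest route is to apply the involution to reduce the dual statement to the already-proved primal one, using the two observations that $a\in R^\#$ iff $a^\ast\in R^\#$ (with $(a^\ast)^\#=(a^\#)^\ast$), and that $a\in R^{\{1,4\}}$ iff $a^\ast\in R^{\{1,3\}}$, together with the defining symmetry $a\in R_{\tiny{\textcircled{\tiny\#}}}$ iff $a^\ast\in R^{\tiny{\textcircled{\tiny\#}}}$. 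If these three equivalences are granted, Theorem \ref{coreinverse2} applied to $a^\ast$ gives the desired characterization of $a$ immediately, and one then only has to track the inverses through the involution to recover the formula $a_{\tiny{\textcircled{\tiny\#}}}=a^{(1,4)}aa^\#$.

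Alternatively, and perhaps more in keeping with the self-contained style of the paper, I would give a direct argument parallel to the one for $R^{\tiny{\textcircled{\tiny\#}}}$. For the forward direction, suppose $a\in R_{\tiny{\textcircled{\tiny\#}}}$; then by the dual analogue of Lemma \ref{five-equations} one has $a\in R^\#$, and the dual core inverse furnishes an element witnessing $a\in R^{\{1,4\}}$ (the equation $(xa)^\ast=xa$ arises from $Rx=Ra$ exactly as $(ax)^\ast=ax$ arose from $xR=aR$ in the primal case). For the converse, assume $a\in R^\#\cap R^{\{1,4\}}$, so that $aa^{(1,4)}a=a$ and $(a^{(1,4)}a)^\ast=a^{(1,4)}a$, whence
$$a=aa^{(1,4)}a=a(a^{(1,4)}a)^\ast=aa^\ast(a^{(1,4)})^\ast.$$
I then set $y=a^{(1,4)}aa^\#$ and verify the three defining conditions $aya=a$, $Ry=Ra$, and $yR=a^\ast R$ by the same sequence of substitutions used in Theorem \ref{coreinverse2}, but with left and right exchanged.

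The main obstacle, such as it is, lies in getting the two one-sided conditions in the correct dual orientation: in Theorem \ref{coreinverse2} the verification of $yR=aR$ used the factorization $a=ya^2$ and the idempotent-style identity $y=a(a^\#)^2aa^{(1,3)}$, while the verification of $Ry=Ra^\ast$ used $a^\ast=a^\ast ay$. For the dual inverse these roles swap: I expect to establish $Ry=Ra$ via $a=a^2y$ and a symmetric rewriting of $y$, and to establish $yR=a^\ast R$ via the chain $a^\ast=a^{(1,4)}aa^\ast=\cdots=ya a^\ast$ obtained from the displayed factorization above. Care is needed to place each $a^\#$ and each $a^{(1,4)}$ on the correct side, since the involution reverses products; apart from this bookkeeping, every individual step is the transpose of a step already carried out, so no genuinely new idea is required.
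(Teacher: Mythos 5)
Your proposal is correct, and it supplies exactly the dualization that the paper itself leaves implicit: the paper states Theorem \ref{dualcoreinverse2} with no proof, remarking only that it is the "corresponding result" to Theorem \ref{coreinverse2}. Both your routes work --- the involution argument via $a\in R_{\tiny{\textcircled{\tiny\#}}}\Leftrightarrow a^{\ast}\in R^{\tiny{\textcircled{\tiny\#}}}$, and the direct verification with $y=a^{(1,4)}aa^{\#}$ using $aya=a$, $a=a^{2}y$ and $a^{\ast}=yaa^{\ast}$ --- and the sign/side bookkeeping you flag does come out as you predict.
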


\begin{lem}  \cite{HC} \label{13-14-equations}
Let $a\in R$, we have:\\
$(I)$ The following conditions are equivalent:\\
$(1)$ $a\in R^{\{1,3\}}$;\\
$(2)$ $R=aR\!\oplus\!(a^{\ast})^{\circ}$;\\
$(3)$ $R=aR\!\!+\!\!(a^{\ast})^{\circ}$;\\
$(4)$ $R=Ra^{\ast}\!\!\oplus^{\circ}\!\!a$;\\
$(5)$ $R=Ra^{\ast}\!\!+ ^{\circ}\!\!a$.\\
In this case,
\begin{eqnarray}\label{eq4}
a\{1,3\}=\{r+(1-ra)w_{1}~|~w_{1}\in R\}=\{s^{\ast}+(1-s^{\ast}a)w_{2}~|~w_{2}\in R\},
\end{eqnarray}
where $1=ar+u=sa^{\ast}+v$ for some $r,s\in R$, $u\in (a^{\ast})^{\circ}$ and $v\in ^{\circ}\!\!a$.\\
$(II)$ The following conditions are equivalent:\\
$(1)$ $a\in R^{\{1,4\}}$;\\
$(2)$ $R=a^{\ast}\!R\!\oplus \!a^{\circ}$;\\
$(3)$ $R=a^{\ast}\!R\!+\!a^{\circ}$;\\
$(4)$ $R=Ra\!\oplus\!^{\circ}\!(a^{\ast})$;\\
$(5)$ $R=Ra\!+ \!^{\circ}\!(a^{\ast})$.\\
In this case,
\begin{eqnarray}\label{eq5}
a\{1,4\}=\{s+w_{1}(1-as)~|~w_{1}\in R\}=\{r^{\ast}+w_{2}(1-ar^{\ast})~|~w_{2}\in R\},
\end{eqnarray}
where $1=a^{\ast}r+u=sa+v$ for some $r,s\in R$, $u\in a^{\circ}$ and $v\in ^{\circ}\!\!(a^{\ast})$.
\end{lem}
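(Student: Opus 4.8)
The plan is to establish part $(I)$ by running two cycles of implications, $(1)\Rightarrow(2)\Rightarrow(3)\Rightarrow(1)$ for the right-sided decompositions and $(1)\Rightarrow(4)\Rightarrow(5)\Rightarrow(1)$ for the left-sided ones, then to read off the description \eqref{eq4} of $a\{1,3\}$, and finally to obtain part $(II)$ by applying part $(I)$ to $a^{\ast}$. The guiding constraint is that $R$ is an arbitrary $\ast$-ring with no positivity available, so every step must be a bare algebraic identity; in particular one may not argue from $x^{\ast}x=0\Rightarrow x=0$, which rules out the usual operator-theoretic shortcuts.

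For the right cycle, $(1)\Rightarrow(2)$ is the constructive step: given $b\in a\{1,3\}$, set $p=ab$. Then $aba=a$ gives $p^{2}=abab=p$ and $pa=aba=a$, while the hypothesis $(ab)^{\ast}=ab$ gives $p^{\ast}=p$ and $p=b^{\ast}a^{\ast}$. From $a=pa\in pR$ and $p=ab\in aR$ one gets $pR=aR$; and $(a^{\ast})^{\circ}=p^{\circ}=(1-p)R$, the inclusion $(a^{\ast})^{\circ}\subseteq p^{\circ}$ using $p=b^{\ast}a^{\ast}$ and the reverse using $a^{\ast}p=a^{\ast}$ (the adjoint of $pa=a$). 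Since $p$ is idempotent, $R=pR\oplus(1-p)R=aR\oplus(a^{\ast})^{\circ}$. The step $(2)\Rightarrow(3)$ is trivial. For $(3)\Rightarrow(1)$, write $1=ar+u$ with $a^{\ast}u=0$; left-multiplying by $a^{\ast}$ annihilates $u$ and yields $a^{\ast}=a^{\ast}ar$, whose adjoint is $a=r^{\ast}a^{\ast}a$. Then $(ar)^{\ast}=r^{\ast}a^{\ast}=r^{\ast}a^{\ast}(ar+u)=r^{\ast}a^{\ast}ar=ar$, so $ar$ is self-adjoint, and $ara=(ar)^{\ast}a=r^{\ast}a^{\ast}a=a$; hence $r\in a\{1,3\}$, which proves $(1)$ and exhibits $r$ as a $\{1,3\}$-inverse.

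The left cycle is handled symmetrically. For $(1)\Rightarrow(4)$ I apply the involution to the decomposition of $(2)$, using $(aR)^{\ast}=Ra^{\ast}$, $((a^{\ast})^{\circ})^{\ast}={}^{\circ}a$, and the fact that $\ast$ carries a direct sum to a direct sum. Again $(4)\Rightarrow(5)$ is trivial. For $(5)\Rightarrow(1)$, from $1=sa^{\ast}+v$ with $va=0$, right-multiplying by $a$ gives $a=sa^{\ast}a$ and its adjoint $a^{\ast}=a^{\ast}as^{\ast}$; combining them, $as^{\ast}=sa^{\ast}as^{\ast}=sa^{\ast}$, so $as^{\ast}$ is self-adjoint, and $as^{\ast}a=sa^{\ast}a=a$, whence $s^{\ast}\in a\{1,3\}$. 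This closes all five equivalences. For formula \eqref{eq4} the key observation is that every $\{1,3\}$-inverse $b$ produces the \emph{same} self-adjoint idempotent $p=ab$: indeed $ab$ is a self-adjoint idempotent with $(ab)R=aR$, and such an idempotent is unique by Lemma \ref{chen}. Hence $a\{1,3\}=\{\,b:ab=p\,\}$, the reverse inclusion using $pa=a$ so that $aba=pa=a$. Since $b=r$ is one solution of $ab=p$, the general solution is $r+a^{\circ}$, and $a^{\circ}=(1-ra)R$ because $a(1-ra)=a-ara=0$ while any $x\in a^{\circ}$ equals $(1-ra)x$; this yields the first description, and using $b=s^{\ast}$ with $a^{\circ}=(1-s^{\ast}a)R$ yields the second.

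Part $(II)$ I would not prove afresh but deduce from part $(I)$ applied to $a^{\ast}$, via the duality $b\in a\{1,4\}\Leftrightarrow b^{\ast}\in a^{\ast}\{1,3\}$ (both amount to $aba=a$ together with $ba=a^{\ast}b^{\ast}$), which in particular gives $a\in R^{\{1,4\}}\Leftrightarrow a^{\ast}\in R^{\{1,3\}}$; the decompositions in $(II)$ are then exactly those of $(I)$ written for $a^{\ast}$, and \eqref{eq5} follows from \eqref{eq4} by taking adjoints. The main obstacle, and the point requiring the most care, is the purely algebraic identification of the annihilators $(a^{\ast})^{\circ}$ and $a^{\circ}$ with the explicit principal one-sided ideals $(1-p)R$ and $(1-ra)R$; the uniqueness of the self-adjoint projection supplied by Lemma \ref{chen} is precisely what turns $a\{1,3\}$ into a single coset and makes the description clean.
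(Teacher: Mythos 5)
The paper does not prove this lemma at all: it is imported verbatim from Han--Chen \cite{HC} and used as a black box, so there is no in-paper argument to compare yours against. Judged on its own, your reconstruction is correct and complete. The two implication cycles are sound: in $(1)\Rightarrow(2)$ the identities $p^{2}=p=p^{\ast}$, $pa=a$, $a^{\ast}p=a^{\ast}$ correctly identify $(a^{\ast})^{\circ}$ with $p^{\circ}=(1-p)R$; in $(3)\Rightarrow(1)$ the computation $r^{\ast}a^{\ast}=r^{\ast}a^{\ast}(ar+u)=ar$ and $ara=r^{\ast}a^{\ast}a=a$ is exactly the standard purely algebraic argument, and you rightly avoid any appeal to $x^{\ast}x=0\Rightarrow x=0$, which indeed is unavailable in a general $\ast$-ring. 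The one step that genuinely needs justification --- that \emph{every} $\{1,3\}$-inverse $b$ yields the same idempotent $ab$, so that $a\{1,3\}$ is the single coset $r+a^{\circ}=r+(1-ra)R$ --- you correctly ground in the uniqueness clause of Lemma \ref{chen}, which the paper does state. Deducing part $(II)$ from part $(I)$ applied to $a^{\ast}$ via the duality $b\in a\{1,4\}\Leftrightarrow b^{\ast}\in a^{\ast}\{1,3\}$ is clean and accounts for the reversed order of the factors $w_{i}(1-as)$ in \eqref{eq5}. I see no gap.
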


\begin{lem} \cite{H} \label{group-decom}
Let $a\in R$, then $a\in R^\#$ if and only if
$R=aR\oplus a^{\circ}$ if and only if
$R=Ra\oplus ^{\circ}\!\!a.$ \\
In this case,
\begin{eqnarray}\label{eq6}
a^\#=ax^{2}=y^{2}a,
\end{eqnarray}
where $1=ax+u=ya+v$ for some $x,y\in R$, $u\in a^{\circ}$ and $v\in ^{\circ}\!\!a$.
\end{lem}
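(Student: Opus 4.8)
The plan is to prove the two equivalences $a\in R^{\#}\Leftrightarrow R=aR\oplus a^{\circ}$ and $a\in R^{\#}\Leftrightarrow R=Ra\oplus{}^{\circ}\!a$ one at a time; since they are left--right mirror images, it suffices to treat the first carefully and then dualize, and the stated formula for $a^{\#}$ will drop out of the backward implication. For the forward direction, assuming $a\in R^{\#}$, I set $e=aa^{\#}=a^{\#}a$; from $aa^{\#}a=a$ one checks $e^{2}=e$ and $ae=ea=a$. Then $r=er+(1-e)r$ with $er=aa^{\#}r\in aR$ and $a(1-e)r=(a-ae)r=0$, so $(1-e)r\in a^{\circ}$ and $R=aR+a^{\circ}$; directness is immediate, since $z=as$ with $az=0$ gives $z=aa^{\#}z=a^{\#}(az)=0$ using $aa^{\#}=a^{\#}a$. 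Thus $R=aR\oplus a^{\circ}$, and $R=Ra\oplus{}^{\circ}\!a$ follows symmetrically.

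For the backward direction I suppose $R=aR\oplus a^{\circ}$ and write $1=ax+u$ with $ax\in aR$ and $u\in a^{\circ}$, so $au=0$. Multiplying by $a$ gives $a=a^{2}x$, i.e.\ $ae=a$ for $e:=ax$. I would then exploit directness $aR\cap a^{\circ}=\{0\}$ repeatedly: from $a(e^{2}-e)=0$ and $e^{2}-e\in aR$ one gets $e^{2}=e$; from $a(a-ea)=a^{2}-aea=0$ and $a-ea\in aR$ one gets $ea=a$; and from $a(er)=a^{2}xr=ar=0$ for $r\in a^{\circ}$ one obtains $a^{\circ}=(1-e)R$, while $e=ax\in aR$ and $a=ea\in eR$ give $aR=eR$. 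At this stage $e$ is an idempotent with $ae=ea=a$, $aR=eR$ and $a^{\circ}=(1-e)R$.

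It then remains to verify that $b=ax^{2}$ is the group inverse. Here $ab=a^{2}x^{2}=(a^{2}x)x=ax=e$ is immediate, and the only nontrivial identity is $ba=ax^{2}a=ax=e$: left-multiplying $ax^{2}a-ax$ by $a$ yields $a^{2}x^{2}a-a^{2}x=(ax)a-a=axa-a=0$, so $ax^{2}a-ax\in a^{\circ}$, while $ax^{2}a-ax\in aR=eR$, whence directness forces it to vanish. With $ab=ba=e$ in hand, $aba=ea=a$, $bab=eb=b$ and $ab=ba$ are routine, so $a\in R^{\#}$ with $a^{\#}=ax^{2}$. The dual argument from $1=ya+v$ gives $a^{\#}=y^{2}a$, and uniqueness of the group inverse identifies the two expressions, yielding the formulas in \eqref{eq6}.

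The main obstacle is exactly this backward implication: the decomposition $R=aR\oplus a^{\circ}$ only directly produces a \emph{one-sided} inverse of $a$ in the corner ring $eRe$, and it is the directness of the sum (equivalently, the vanishing of $aR\cap a^{\circ}$, which trivializes the right annihilator of $a$ inside $eRe$) that upgrades this to a two-sided, and hence group, inverse. Pinpointing where directness is used---as opposed to the mere sum $R=aR+a^{\circ}$, which yields only $a\in a^{2}R$---is the crux of the argument.
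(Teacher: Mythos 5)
The paper does not prove this lemma at all; it is quoted verbatim from Hartwig's paper \cite{H}, so there is no in-text argument to compare yours against. Your proof is correct and complete on its own terms: the forward direction via the idempotent $e=aa^{\#}$ is standard, and in the backward direction you correctly isolate where directness (as opposed to the mere sum $R=aR+a^{\circ}$, which only gives $a\in a^{2}R$) is invoked --- namely to force $e^{2}=e$, $ea=a$, and $ax^{2}a=ax$, each time by exhibiting an element of $aR\cap a^{\circ}$. The verification that $b=ax^{2}$ satisfies $ab=ba=e$, $aba=a$, $bab=b$ is sound, and the dual decomposition together with uniqueness of the group inverse gives $a^{\#}=ax^{2}=y^{2}a$ as stated.
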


\begin{prop} \label{R-decomp-core-inverse}
Let $a\in R$, then the following conditions are equivalent:\\
$(1)$ $a\in R^{\tiny{\textcircled{\tiny\#}}}$;\\
$(2)$ $R=aR\oplus(a^{\ast})^{\circ}=aR\oplus a^{\circ}$;\\
$(3)$ $R=aR+(a^{\ast})^{\circ}=aR\oplus a^{\circ}$;\\
$(4)$ $R=Ra^{\ast}\oplus^{\circ}\!\!a=aR\oplus a^{\circ}$;\\
$(5)$ $R=Ra^{\ast}+ ^{\circ}\!\!a=aR\oplus a^{\circ}$;\\
$(6)$ $R=aR\oplus(a^{\ast})^{\circ}=Ra\oplus ^{\circ}\!\!a$;\\
$(7)$ $R=aR+(a^{\ast})^{\circ}=Ra\oplus ^{\circ}\!\!a$;\\
$(8)$ $R=Ra^{\ast}\oplus^{\circ}\!\!a=Ra\oplus ^{\circ}\!\!a$;\\
$(9)$ $R=Ra^{\ast}+ ^{\circ}\!\!a=Ra\oplus ^{\circ}\!\!a$.\\
In this case, $$a^{\tiny{\textcircled{\tiny\#}}}=ay_{1}^{2}ax_{1}=ay_{1}^{2}ax_{2}^{\ast}=y_{2}ax_{1}=y_{2}ax_{2}^{\ast},$$
where $1=ax_{1}+u_{1}=x_{2}a^{\ast}+u_{2}=ay_{1}+v_{1}=y_{2}a+v_{2}$ for some $x_{1},x_{2},y_{1},y_{2}\in R$,
 $u_{1}\in (a^{\ast})^{\circ}$, $v_{1}\in a^{\circ}$ and $u_{2},v_{2}\in ^{\circ}\!\!a$.
\end{prop}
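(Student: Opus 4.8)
The plan is to observe that each of conditions $(2)$–$(9)$ is the conjunction of two separate decompositions: a ``$\{1,3\}$-type'' statement built from $aR$, $(a^{\ast})^{\circ}$, $Ra^{\ast}$, ${}^{\circ}a$, and a ``group-type'' statement built from $aR$, $a^{\circ}$, $Ra$, ${}^{\circ}a$. In every case the first half is one of the five equivalent conditions of Lemma \ref{13-14-equations}$(I)$, hence equivalent to $a\in R^{\{1,3\}}$, while the second half is one of the two equivalent conditions of Lemma \ref{group-decom}, hence equivalent to $a\in R^{\#}$. Thus each condition is equivalent to $a\in R^{\#}\cap R^{\{1,3\}}$, and by Theorem \ref{coreinverse2} this is precisely $(1)$, namely $a\in R^{\tiny{\textcircled{\tiny\#}}}$.

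Concretely, I would run down the list and match each first half to Lemma \ref{13-14-equations}$(I)$: the decompositions $R=aR\oplus(a^{\ast})^{\circ}$ and $R=aR+(a^{\ast})^{\circ}$ are items $(2)$ and $(3)$ there, while $R=Ra^{\ast}\oplus{}^{\circ}a$ and $R=Ra^{\ast}+{}^{\circ}a$ are items $(4)$ and $(5)$; all four are equivalent to $a\in R^{\{1,3\}}$. Likewise $R=aR\oplus a^{\circ}$ and $R=Ra\oplus{}^{\circ}a$ are exactly the two decompositions of Lemma \ref{group-decom}, both equivalent to $a\in R^{\#}$. Since each of $(2)$–$(9)$ is a conjunction whose first half reduces to $a\in R^{\{1,3\}}$ and whose second half reduces to $a\in R^{\#}$, all eight collapse to the single condition $a\in R^{\#}\cap R^{\{1,3\}}$, closing the chain of equivalences.

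For the formula I would start from $a^{\tiny{\textcircled{\tiny\#}}}=a^{\#}aa^{(1,3)}$ (Theorem \ref{coreinverse2}). Under the stated decompositions, Lemma \ref{group-decom} applied to $1=ay_{1}+v_{1}=y_{2}a+v_{2}$ gives $a^{\#}=ay_{1}^{2}=y_{2}^{2}a$, and Lemma \ref{13-14-equations}$(I)$, applied with $w_{1}=w_{2}=0$ to $1=ax_{1}+u_{1}=x_{2}a^{\ast}+u_{2}$, shows that both $x_{1}$ and $x_{2}^{\ast}$ lie in $a\{1,3\}$, so either may be taken as $a^{(1,3)}$. Substituting $a^{\#}=ay_{1}^{2}$ directly yields $a^{\tiny{\textcircled{\tiny\#}}}=ay_{1}^{2}ax_{1}=ay_{1}^{2}ax_{2}^{\ast}$. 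For the remaining two expressions I would substitute $a^{\#}=y_{2}^{2}a$, obtaining $y_{2}^{2}a^{2}a^{(1,3)}$, and then use the identity $y_{2}a^{2}=a$ (got by right-multiplying $1=y_{2}a+v_{2}$ by $a$ and using $v_{2}a=0$) to reduce $y_{2}^{2}a^{2}$ to $y_{2}a$, which delivers $a^{\tiny{\textcircled{\tiny\#}}}=y_{2}ax_{1}=y_{2}ax_{2}^{\ast}$.

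The proof is essentially bookkeeping: all the substance sits in the three lemmas already established, and the main work is only to pair each half of each condition with the correct item. The single computational point requiring care is the reduction $y_{2}^{2}a^{2}=y_{2}a$ used in the two formulas written with $a^{\#}=y_{2}^{2}a$; the rest is direct substitution, and the agreement of the four expressions is in any case forced by the uniqueness of the core inverse.
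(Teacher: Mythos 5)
Your proposal is correct and follows essentially the same route as the paper: both reduce conditions $(2)$--$(9)$ to $a\in R^{\#}\cap R^{\{1,3\}}$ via Lemma \ref{13-14-equations}$(I)$ and Lemma \ref{group-decom}, invoke Theorem \ref{coreinverse2} for the equivalence with $(1)$ and the formula $a^{\tiny{\textcircled{\tiny\#}}}=a^{\#}aa^{(1,3)}$, and use $a^{(1,3)}=x_{1}=x_{2}^{\ast}$, $a^{\#}=ay_{1}^{2}=y_{2}^{2}a$ together with the reduction $y_{2}a^{2}=a$ to obtain the four expressions. If anything, you spell out the matching of each half-condition to the lemmas more explicitly than the paper's ``it is easy to see.''
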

\begin{proof} By Theorem \ref{coreinverse2}, we have $a\in R^{\tiny{\textcircled{\tiny\#}}}$ if and only if $a\in R^\#\cap R^{\{1,3\}}$ and
\begin{eqnarray}\label{eq7}
a^{\tiny{\textcircled{\tiny\#}}}=a^\#aa^{(1,3)}.
\end{eqnarray}
Thus it is easy to see $(1)$-$(9)$ are equivalent by Lemma \ref{13-14-equations} and Lemma \ref{group-decom}.
Suppose $1=ax_{1}+u_{1}=x_{2}a^{\ast}+u_{2}=ay_{1}+v_{1}=y_{2}a+v_{2}$, for some $x_{1},x_{2},y_{1},y_{2}\in R$,
 $u_{1}\in (a^{\ast})^{\circ}$, $v_{1}\in a^{\circ}$ and $u_{2},v_{2}\in ^{\circ}\!\!a$. Then by (\ref{eq4}) and (\ref{eq6}), respectively, we have
\begin{eqnarray}\label{eq8}
a^{(1,3)}=x_{1}=x_{2}^{\ast}~~~and~~~a^\#=ay_{1}^{2}=y_{2}^{2}a.
\end{eqnarray}
\begin{eqnarray}\label{eq9}
a=y_{2}a^{2}+v_{2}a=y_{2}a^{2}.
\end{eqnarray}
Thus by (\ref{eq7}) and (\ref{eq8}), we have
$$a^{\tiny{\textcircled{\tiny\#}}}=ay_{1}^{2}ax_{1}=ay_{1}^{2}ax_{2}^{\ast}=y_{2}^{2}a^{2}x_{1}=y_{2}^{2}a^{2}x_{2}^{\ast}.$$
Hence $a^{\tiny{\textcircled{\tiny\#}}}=ay_{1}^{2}ax_{1}=ay_{1}^{2}ax_{2}^{\ast}=y_{2}ax_{1}=y_{2}ax_{2}^{\ast}$ by (\ref{eq9}).
\end{proof}

\begin{prop} \label{R-decomp-dual-core-inverse}
Let $a\in R$, then the following conditions are equivalent:\\
$(1)$ $a\in R_{\tiny{\textcircled{\tiny\#}}}$;\\
$(2)$ $R=a^{\ast}R\oplus a^{\circ}=aR\oplus a^{\circ}$;\\
$(3)$ $R=a^{\ast}R+ a^{\circ}=aR\oplus a^{\circ}$;\\
$(4)$ $R=Ra\oplus^{\circ}\!(a^{\ast})=aR\oplus a^{\circ}$;\\
$(5)$ $R=Ra+^{\circ}\!(a^{\ast})=aR\oplus a^{\circ}$;\\
$(6)$ $R=a^{\ast}R\oplus a^{\circ}=Ra\oplus ^{\circ}\!\!a$;\\
$(7)$ $R=a^{\ast}R+ a^{\circ}=Ra\oplus ^{\circ}\!\!a$;\\
$(8)$ $R=Ra\oplus^{\circ}\!(a^{\ast})=Ra\oplus ^{\circ}\!\!a$;\\
$(9)$ $R=Ra+^{\circ}\!(a^{\ast})=Ra\oplus ^{\circ}\!\!a$.\\
In this case, $$a_{\tiny{\textcircled{\tiny\#}}}=x_{1}^{\ast}ay_{1}=x_{1}^{\ast}ay_{2}^{2}a=x_{2}ay_{1}=x_{2}ay_{2}^{2}a,$$
where $1=a^{\ast}x_{1}+u_{1}=x_{2}a+u_{2}=ay_{1}+v_{1}=y_{2}a+v_{2}$ for some $x_{1},x_{2},y_{1},y_{2}\in R$,
 $u_{2}\in ^{\circ}\!(a^{\ast})$, $v_{2}\in ^{\circ}\!\!a$ and $u_{1},v_{1}\in a^{\circ}$.
\end{prop}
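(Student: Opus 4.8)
The plan is to mirror the proof of Proposition~\ref{R-decomp-core-inverse}, simply replacing the $\{1,3\}$-data by $\{1,4\}$-data. The starting point is Theorem~\ref{dualcoreinverse2}, which says that $a\in R_{\tiny{\textcircled{\tiny\#}}}$ if and only if $a\in R^\#\cap R^{\{1,4\}}$, together with the formula $a_{\tiny{\textcircled{\tiny\#}}}=a^{(1,4)}aa^\#$. So the entire statement reduces to two tasks: recognizing each of $(2)$--$(9)$ as the conjunction of a decomposition that characterizes $\{1,4\}$-invertibility and one that characterizes group invertibility, and then computing $a^{(1,4)}aa^\#$ explicitly in terms of the displayed splitting elements.

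For the equivalences I would observe that in every one of $(2)$--$(9)$ the first equality is exactly one of the four nontrivial equivalent statements for $a\in R^{\{1,4\}}$ listed in Lemma~\ref{13-14-equations}$(II)$ (namely $R=a^{\ast}R\oplus a^{\circ}$, $R=a^{\ast}R+a^{\circ}$, $R=Ra\oplus{}^{\circ}\!(a^{\ast})$, or $R=Ra+{}^{\circ}\!(a^{\ast})$), while the second equality is one of the two decompositions characterizing $a\in R^\#$ from Lemma~\ref{group-decom} (namely $R=aR\oplus a^{\circ}$ or $R=Ra\oplus{}^{\circ}\!\!a$). Hence each of $(2)$--$(9)$ is equivalent to $a\in R^\#\cap R^{\{1,4\}}$, and Theorem~\ref{dualcoreinverse2} closes the loop with $(1)$.

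For the formula I would fix the splitting $1=a^{\ast}x_{1}+u_{1}=x_{2}a+u_{2}=ay_{1}+v_{1}=y_{2}a+v_{2}$ with the indicated annihilator memberships. Matching this to the description (\ref{eq5}) of $a\{1,4\}$ (taking $r=x_{1}$ and $s=x_{2}$, with $w_1=w_2=0$) gives $a^{(1,4)}=x_{2}=x_{1}^{\ast}$, and matching to (\ref{eq6}) (taking $x=y_{1}$ and $y=y_{2}$) gives $a^\#=ay_{1}^{2}=y_{2}^{2}a$. Left-multiplying $1=ay_{1}+v_{1}$ by $a$ and using $v_{1}\in a^{\circ}$ yields the key simplification $a=a^{2}y_{1}$, whence $a^{2}y_{1}^{2}=ay_{1}$. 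Substituting $a^\#=ay_{1}^{2}$ into $a_{\tiny{\textcircled{\tiny\#}}}=a^{(1,4)}aa^\#$ collapses $a^{(1,4)}a^{2}y_{1}^{2}$ to $a^{(1,4)}ay_{1}$, while substituting $a^\#=y_{2}^{2}a$ keeps the factor $ay_{2}^{2}a$; reading $a^{(1,4)}$ as $x_{2}$ and as $x_{1}^{\ast}$ then produces the four asserted expressions $a_{\tiny{\textcircled{\tiny\#}}}=x_{1}^{\ast}ay_{1}=x_{1}^{\ast}ay_{2}^{2}a=x_{2}ay_{1}=x_{2}ay_{2}^{2}a$.

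The equivalence bookkeeping is harmless; the step demanding the most care is matching indices and annihilator conditions to the conventions of Lemma~\ref{13-14-equations}$(II)$ and Lemma~\ref{group-decom}. One must note that the $\{1,4\}$-data are governed by $a^{\ast}$ (so $u_{1}\in a^{\circ}$ pairs with $1=a^{\ast}x_{1}+u_{1}$, and the involution enters only through the two representations $x_{2}=x_{1}^{\ast}$ of $a^{(1,4)}$), whereas the group data are governed by $a$ itself. Keeping these two bookkeepings apart, and correctly invoking $a=a^{2}y_{1}$ for the simplification, is the only genuine obstacle.
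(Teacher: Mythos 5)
Your proof is correct and takes essentially the same approach the paper intends: it is the exact dual of the paper's proof of Proposition~\ref{R-decomp-core-inverse}, which is why the paper states this proposition without a separate argument. The two delicate points --- matching the splittings to Lemma~\ref{13-14-equations}$(II)$ and Lemma~\ref{group-decom}, and deriving $a=a^{2}y_{1}$ from $v_{1}\in a^{\circ}$ to collapse $a^{(1,4)}a^{2}y_{1}^{2}$ to $a^{(1,4)}ay_{1}$ --- are both handled correctly.
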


\section{ New characterizations of elements to be core invertible by equations  }\label{a}

In this section, some new characterizations of core and dual core inverses in rings are obtained.

In the following theorem, we can prove that
equations $axa=a$ and $xax=x$ in Lemma \ref{five-equations-yy} can be dropped.

\begin{thm} \label{coreinverse1}
Let $a,x\in R$, then $a\in R^{\tiny{\textcircled{\tiny\#}}}$ with $a^{\tiny{\textcircled{\tiny\#}}}=x$
if and only if $(ax)^{\ast}=ax$, $xa^{2}=a$ and $ax^{2}=x$.
\end{thm}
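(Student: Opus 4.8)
The forward implication is immediate and requires no work: if $a\in R^{\tiny{\textcircled{\tiny\#}}}$ with $a^{\tiny{\textcircled{\tiny\#}}}=x$, then Lemma \ref{five-equations-yy} already delivers all five defining equations, and the three equations $(ax)^{\ast}=ax$, $xa^{2}=a$, $ax^{2}=x$ are simply three of those five. So the entire content of the theorem lies in the converse, and my plan is to assume only the three equations and then \emph{recover} the two that were dropped from Lemma \ref{five-equations-yy}, namely the inner relation $axa=a$ and the outer relation $xax=x$. Once all five equations are in hand, Lemma \ref{five-equations-yy} immediately yields $a\in R^{\tiny{\textcircled{\tiny\#}}}$ with $a^{\tiny{\textcircled{\tiny\#}}}=x$.

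The device I would use is to feed the two given relations $a=xa^{2}$ and $x=ax^{2}$ back into themselves so that the relevant word telescopes in a single pass. To get $axa=a$, I would rewrite the trailing factor $a$ by $a=xa^{2}$ and then collapse using $ax^{2}=x$ followed by $xa^{2}=a$:
\[
axa=ax(xa^{2})=(ax^{2})a^{2}=xa^{2}=a.
\]
Symmetrically, to get $xax=x$, I would rewrite the trailing factor $x$ by $x=ax^{2}$ and then collapse using $xa^{2}=a$ followed by $ax^{2}=x$:
\[
xax=xa(ax^{2})=(xa^{2})x^{2}=ax^{2}=x.
\]
With $axa=a$ and $xax=x$ now established alongside the hypotheses $(ax)^{\ast}=ax$, $xa^{2}=a$ and $ax^{2}=x$, all five conditions of Lemma \ref{five-equations-yy} hold, and the converse follows.

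I do not anticipate a genuine obstacle; the only real subtlety is a bookkeeping one, namely choosing \emph{which} occurrence of $a$ (respectively $x$) to substitute so that the string collapses cleanly rather than expanding. It is worth flagging one structural point that the computation makes transparent: the involution hypothesis $(ax)^{\ast}=ax$ is never invoked in deriving $axa=a$ and $xax=x$, since those two identities are consequences of $xa^{2}=a$ and $ax^{2}=x$ alone. Thus the Hermitian condition is spent entirely at the final appeal to Lemma \ref{five-equations-yy}, which isolates exactly where the $\ast$-structure is needed.
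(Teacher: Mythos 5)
Your proposal is correct and is essentially the paper's own argument: the paper likewise derives $xax=x$ and $axa=a$ by substituting $a=xa^{2}$ (resp.\ $x=ax^{2}$) into the given identities and collapsing (e.g.\ $x=ax^{2}=xa^{2}x^{2}=xa(ax^{2})=xax$), then invokes Lemma \ref{five-equations-yy}; your chains are the same computations read in the opposite direction. Your closing remark that $(ax)^{\ast}=ax$ is only needed at the final appeal to the lemma is accurate but does not change the route.
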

\begin{proof} Suppose $a\in R^{\tiny{\textcircled{\tiny\#}}}$, then we have $(ax)^{\ast}=ax$, $xa^{2}=a$ and $ax^{2}=x$
 by Lemma \ref{five-equations-yy}.
Conversely, if $(ax)^{\ast}=ax$, $xa^{2}=a$ and $ax^{2}=x$, then we have
\begin{eqnarray}\label{eq1}
&x=ax^{2}=xa^{2}x^{2}=xa(ax^{2})=xax.\\
&a=xa^{2}=ax^{2}a^{2}=ax(xa^{2})=axa.
\end{eqnarray}
Thus by Lemma \ref{five-equations-yy}, we have $a\in R^{\tiny\textcircled{\tiny\#}}$ and $a^{\tiny\textcircled{\tiny\#}}=x.$
\end{proof}

In \cite[Theorem 2.1]{WL}, Wang and Liu proved that for $A\in \mathbb{C}^{CM}_{n}$, then the core inverse of $A$
is the unique matrix $X\in M_{n}(\mathbb{C})$ satisfying the following three equations:
$$~AXA=A,~~~~AX^{2}=X,~~~~(AX)^{\ast}=AX.$$
By the definition of $\mathbb{C}^{CM}_{n}$, we know $A\in \mathbb{C}^{CM}_{n}$ if and only if $A$ is group invertible.
We will extend the above result to the ring case, let us begin with an lemma.
\begin{lem} \cite{BG} \label{Gi-conditions}
Let $R$ be a ring, if $a\in R$ is regular with inner inverse $a^{-}$, then the following conditions are equivalent:\\
$(1)$ $a\in R^\#$;\\
$(2)$ $u=a^{2}a^{-}+1-aa^{-}$ is invertible;\\
$(3)$ $v=a^{-}a^{2}+1-a^{-}a$ is invertible;\\
$(4)$ $u^{\tiny\prime}=a+1-aa^{-}$ is invertible;\\
$(5)$ $v^{\tiny\prime}=a+1-a^{-}a$ is invertible.\\
In this case,
$$a^\#=u^{-2}a=av^{-2}.$$
\end{lem}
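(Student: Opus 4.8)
The plan is to reduce the invertibility of each of the four companion elements to the invertibility of a single element inside a corner ring, and then to identify that corner condition with group invertibility.

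First I would record the elementary identities that follow at once from $aa^-a=a$. Writing $p=aa^-$, one checks $ua=a^2$, $u'a=a^2$, $av=a^2$, $av'=a^2$, together with the complementary relations $(1-p)u=1-p$ and $(1-p)u'=1-p$ and their left--right duals for $v,v'$. These identities are the backbone: the first group shows, whenever the companion is a unit, that $a$ is pushed into $Ra^2$ or $a^2R$, while the second group pins down how the companion acts on the Peirce summand $(1-p)R$.

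Next I would analyse $u$ and $u'$ through the Peirce decomposition relative to the idempotent $p=aa^-$. A direct computation (using only $aa^-a=a$) gives $pu=up=a^2a^-$ and $(1-p)u(1-p)=1-p$ with vanishing off-diagonal corners, so $u$ is block diagonal; similarly $u'$ is block upper triangular with diagonal corners $pu'p=a^2a^-$ and $(1-p)u'(1-p)=1-p$. Since the $(1-p)$-corner block is the identity of $(1-p)R(1-p)$, the standard fact that a block-triangular element with invertible diagonal corners is a unit (and conversely) yields that $(2)\Leftrightarrow(4)\Leftrightarrow a^2a^-$ is invertible in $pRp$. This corner reduction is the technical heart of the argument and the step I expect to be the main obstacle, because one must verify both the block structure and the corner-invertibility criterion carefully while working with a possibly non-reflexive inner inverse.

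Finally I would match the corner condition with condition $(1)$. For $(1)\Rightarrow$ corner invertibility I would exhibit $s=a^\#aa^-$ and check that $s\in pRp$ and $s(a^2a^-)=(a^2a^-)s=p$, using the readily verified identity $aa^-a^\#=a^\#$. For the converse I would take the corner inverse $s$, set $b=s^2a$, and verify from $as=p$ and $sa^2=a$ (both consequences of $s\in pRp$ and $s(a^2a^-)=p$) that $ab=ba=sa$, $aba=a$ and $bab=b$; hence $a\in R^\#$ with $a^\#=b$, and since $u^{-2}a=s^2a=b$ this simultaneously delivers the formula $a^\#=u^{-2}a$. The statements for $v$ and $v'$, and the companion formula $a^\#=av^{-2}$, then follow by applying the whole argument in the opposite ring $R^{\mathrm{op}}$, in which $a$ keeps the same inner inverse $a^-$, the element $v$ plays the role of $u$, and $u^{-2}a$ becomes $av^{-2}$.
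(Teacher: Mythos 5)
The paper states this lemma without proof, citing it to the literature (it is the classical ``companion element'' criterion for group invertibility of a regular element, in the spirit of Puystjens--Hartwig), so there is no in-paper argument to compare yours against; I have therefore checked your proof on its own terms, and it is correct. The Peirce reduction works exactly as you describe: $pu=up=a^2a^-$ and $(1-p)u=1-p$ make $u$ block diagonal, while $u'$ has only the one nonzero off-diagonal corner $pu'(1-p)=a-a^2a^-$; the identities $aa^-a^\#=a^\#$, $s(a^2a^-)=(a^2a^-)s=p$ for $s=a^\#aa^-$, the passage from a corner inverse $s$ to $b=s^2a$ via $as=p$ and $sa^2=a$, the computation $u^{-2}=s^2+1-p$ giving $a^\#=u^{-2}a$, and the reduction of $v,v'$ to $u,u'$ in $R^{\mathrm{op}}$ all verify. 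The one place where your wording outruns the mathematics is the parenthetical ``(and conversely)'' attached to the triangular-invertibility fact: for a general Peirce-triangular element, invertibility of the whole does \emph{not} force invertibility of the diagonal corners (the unitary operator matrix built from the unilateral shift and its adjoint is the standard counterexample). Your argument survives only because the $(1-p)$-corner of $u'$ equals the identity of $(1-p)R(1-p)$, which forces the $(1-p)$-to-$p$ corner of the inverse to vanish and hence makes $pu'p$ genuinely two-sidedly invertible in $pRp$; since you explicitly flag that the corner is the identity, this is a point of presentation rather than a gap, but it should be proved rather than cited as a ``standard fact'' in that generality.
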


\begin{thm} \label{Thr-conditions-cor1}
Let $a,b\in R$. If $Ra=Ra^{2}$, then the following are equivalent:\\
$(1)$ $a\in R^{\tiny\textcircled{\tiny\#}}$ with core inverse $b$;\\
$(2)$ $aba=a$, $(ab)^{\ast}=ab$ and $ab^{2}=b$.
\end{thm}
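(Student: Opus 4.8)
The plan is to treat the two implications asymmetrically: $(1)\Rightarrow(2)$ is immediate, while the content lies in $(2)\Rightarrow(1)$, where the extra hypothesis $Ra=Ra^{2}$ must be converted into group invertibility. For $(1)\Rightarrow(2)$ I would simply invoke Lemma \ref{five-equations-yy}: if $a\in R^{\tiny{\textcircled{\tiny\#}}}$ with core inverse $b$, then the five equations hold, and $aba=a$, $(ab)^{\ast}=ab$, $ab^{2}=b$ are three of them (this direction does not even use $Ra=Ra^{2}$). For the converse my strategy is to establish separately that $a\in R^{\{1,3\}}$ and $a\in R^{\#}$, conclude $a\in R^{\tiny{\textcircled{\tiny\#}}}$ by Theorem \ref{coreinverse2}, and finally identify the core inverse as $b$.

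The $\{1,3\}$-part is free: from $aba=a$ and $(ab)^{\ast}=ab$ the element $b$ is a $\{1,3\}$-inverse of $a$, so $a\in R^{\{1,3\}}$ and we may take $a^{(1,3)}=b$. The crux is group invertibility, and this is where I expect the real difficulty. First I would record two consequences of $(2)$: the element $ab$ is a self-adjoint idempotent, since $(ab)(ab)=(aba)b=ab$, and, substituting $b=ab^{2}$ into $a=aba$,
$$a=aba=a(ab^{2})a=a^{2}b^{2}a\in a^{2}R,$$
so that $aR=a^{2}R$. On the other hand $Ra=Ra^{2}$ gives $a=ca^{2}$ for some $c\in R$, and $aba=a$ makes $a$ regular with inner inverse $b$. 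I would then feed these data into Lemma \ref{Gi-conditions} and verify that the associated element $u=a^{2}b+1-ab$ is invertible, which yields $a\in R^{\#}$. The mechanism is that, writing $f=ab$, one has $R=fR\oplus(1-f)R$, with $u$ acting as left multiplication by $a$ on $fR=aR$ and as the identity on $(1-f)R$; hence $u$ is a unit precisely when $x\mapsto ax$ is bijective on $aR$. Surjectivity follows from $aR=a^{2}R$, and the decisive input of $Ra=Ra^{2}$ is injectivity: if $x=ay\in aR$ satisfies $ax=0$, then $a^{2}y=0$, whence $ay=ca^{2}y=0$, i.e. $x=0$. I expect this invertibility check to be the main obstacle, because every attempt to produce the missing equation $ba^{2}=a$ by direct manipulation of $(2)$ together with $a=ca^{2}$ runs in a circle; it is exactly the group-inverse machinery, fed by $a=ca^{2}$, that breaks the circularity.

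Once $a\in R^{\#}\cap R^{\{1,3\}}$ is in hand, Theorem \ref{coreinverse2} gives $a\in R^{\tiny{\textcircled{\tiny\#}}}$ with $a^{\tiny{\textcircled{\tiny\#}}}=a^{\#}aa^{(1,3)}$. Choosing $a^{(1,3)}=b$ and using $b=ab^{2}$ together with the group-inverse identity $a^{\#}a^{2}=a$, I would compute
$$a^{\tiny{\textcircled{\tiny\#}}}=a^{\#}ab=a^{\#}a(ab^{2})=a^{\#}a^{2}b^{2}=ab^{2}=b,$$
so the core inverse is exactly $b$, which completes $(2)\Rightarrow(1)$. As an alternative ending, once $a\in R^{\#}$ is known one can instead derive $ba^{2}=a$ (now available because $a^{\#}$ exists) and quote Theorem \ref{coreinverse1} directly, since $(ab)^{\ast}=ab$ and $ab^{2}=b$ are already given.
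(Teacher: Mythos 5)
Your proof is correct, and its skeleton coincides with the paper's: both handle $(1)\Rightarrow(2)$ by Lemma \ref{five-equations-yy}, and for the converse both extract $a=a^{2}b^{2}a\in a^{2}R$ from the three equations, combine this with $Ra=Ra^{2}$ to get $a\in R^{\#}$, note $b\in a\{1,3\}$, and invoke Theorem \ref{coreinverse2}. You differ in two sub-steps. For group invertibility the paper simply cites \cite[Proposition 7]{H} (that $a\in a^{2}R\cap Ra^{2}$ forces $a\in R^{\#}$), whereas you re-derive this by checking directly that $u=a^{2}b+1-ab$ is a unit via the decomposition $R=abR\oplus(1-ab)R$, with $aR=a^{2}R$ giving surjectivity and $a=ca^{2}$ giving injectivity of left multiplication by $a$ on $aR$; this is self-contained and valid (a right-invertible element that is not a left zero divisor is a unit), but it amounts to reproving the cited fact. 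For the identification of the core inverse the paper works with the unit $a+1-ab$, showing that both $a^{\tiny\textcircled{\tiny\#}}$ and $b$ equal $(a+1-ab)^{-1}ab$; your computation $a^{\tiny\textcircled{\tiny\#}}=a^{\#}ab=a^{\#}a^{2}b^{2}=ab^{2}=b$ reaches the same conclusion more directly and avoids inverting $a+1-ab$ altogether. Both endings rest on the formula $a^{\tiny\textcircled{\tiny\#}}=a^{\#}aa^{(1,3)}$ with the specific choice $a^{(1,3)}=b$, which is legitimate since the proof of Theorem \ref{coreinverse2} works for an arbitrary $\{1,3\}$-inverse. Your alternative ending via $ba^{2}=a$ and Theorem \ref{coreinverse1} also works, though you would need to record the short derivation $b=a^{\#}ab$, hence $ba^{2}=a^{\#}(aba)a=a$.
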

\begin{proof}

$(1)\Rightarrow(2)$ It is trivial by Lemma \ref{five-equations-yy}.

$(2)\Rightarrow(1)$
Suppose $aba=a$, $ab^{2}=b$, $(ab)^{\ast}=ab$, then $a=aba=a^{2}b^{2}a\in a^{2}R$, thus
$a\in R^\#$ by the hypothesis $Ra=Ra^{2}$ and  \cite[Proposition 7]{H}. So $a+1-ab$ is invertible by Lemma \ref{Gi-conditions}.
\begin{eqnarray}\label{eq10}
(a+1-ab)a^\#ab=aa^\#ab=ab.
\end{eqnarray}
(\ref{eq10}) is equivalent to
\begin{eqnarray}\label{eq11}
a^\#ab=(a+1-ab)^{-1}ab.
\end{eqnarray}
By (\ref{eq11}) and Theorem \ref{coreinverse2} we have
\begin{eqnarray}\label{eq12}
a^{\tiny\textcircled{\tiny\#}}=(a+1-ab)^{-1}ab.
\end{eqnarray}
Since $ab^{2}=b$, then
$0=b-ab^{2}=(a+1-ab)b-ab$, thus
\begin{eqnarray}\label{eq13}
b=(a+1-ab)^{-1}ab.
\end{eqnarray}

Hence $b=a^{\tiny\textcircled{\tiny\#}}$ by (\ref{eq12}) and (\ref{eq13}).
\end{proof}

An infinite matrix $M$ is said to be bi-finite if it is both row-finite and column-finite.

\begin{eg} \label{conter1}
The condition $Ra=Ra^{2}$ in Theorem $\ref{Thr-conditions-cor1}$ cannot be dropped.
Let $R$ be the ring of all bi-finite matrices over the real field $\mathbb{R}$ with transpose as involution.
Consider the following matrices $A,B$ over $R$.
$A=\sum\limits_{i=1}^{\infty}e_{i,i+1}$ and
$B=A^{\ast}$, where $e_{i,j}$ denotes the matrix with $(i,j)$-th is $1$ and other entries are zero.
Then
$BA=\sum\limits_{i=2}^{\infty}e_{i,i}$ and
$AB=I$, thus $ABA=A$, $(AB)^{\ast}=AB$, $AB^{2}=B$. Since $A$ is not group invertible, $A$ is not core invertible.
\end{eg}

\begin{thm} \label{Thr-conditions-cor2}
Let $a,b\in R$. If $aR=a^{2}R$, then the following are equivalent:\\
$(1)$ $a\in R^{\tiny\textcircled{\tiny\#}}$ with core inverse $a^{\tiny\textcircled{\tiny\#}}=b$;\\
$(2)$ $bab=b$, $(ab)^{\ast}=ab$ and $ba^{2}=a$.
\end{thm}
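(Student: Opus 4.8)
Since $(1)\Rightarrow(2)$ is immediate from Lemma \ref{five-equations-yy} (the five defining equations for the core inverse already contain $bab=b$, $(ab)^{\ast}=ab$ and $ba^{2}=a$), the entire content lies in $(2)\Rightarrow(1)$. My plan is to prove it as the exact left--right mirror of Theorem \ref{Thr-conditions-cor1}: I will show that the hypotheses force $a\in R^{\#}\cap R^{\{1,3\}}$ and then read off $a^{\tiny\textcircled{\tiny\#}}=b$ from the formula $a^{\tiny\textcircled{\tiny\#}}=a^{\#}aa^{(1,3)}$ supplied by Theorem \ref{coreinverse2}.

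First I would secure group invertibility. The equation $ba^{2}=a$ gives $a=ba^{2}\in Ra^{2}$, that is $Ra=Ra^{2}$, while the standing hypothesis $aR=a^{2}R$ gives $a\in aR=a^{2}R$. Combining these two one-sided conditions through \cite[Proposition 7]{H} yields $a\in R^{\#}$, so that $a^{\#}$ exists and satisfies $aa^{\#}=a^{\#}a$ and $a^{2}a^{\#}=a$. This is the symmetric counterpart of the corresponding step in Theorem \ref{Thr-conditions-cor1}, where $Ra=Ra^{2}$ was assumed and $a\in a^{2}R$ was derived from the equations; here the roles of the two sides are interchanged and the derivation of $a\in Ra^{2}$ is even more direct.

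The crucial step is to pin down the idempotent $ba$. Right-multiplying $ba^{2}=a$ by $a^{\#}$ and using $a^{2}a^{\#}=a$ gives $ba=b(a^{2}a^{\#})=(ba^{2})a^{\#}=aa^{\#}$, so $ba$ is precisely the group idempotent. Consequently $aba=a(ba)=a(aa^{\#})=a^{2}a^{\#}=a$, and together with the given $(ab)^{\ast}=ab$ this says exactly that $b$ is a $\{1,3\}$-inverse of $a$; hence $a\in R^{\{1,3\}}$. Now Theorem \ref{coreinverse2} applies: $a\in R^{\tiny\textcircled{\tiny\#}}$ and, taking $a^{(1,3)}=b$, we get $a^{\tiny\textcircled{\tiny\#}}=a^{\#}ab$. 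Finally $a^{\#}ab=(aa^{\#})b=(ba)b=bab=b$, using $aa^{\#}=a^{\#}a$, the identity $ba=aa^{\#}$ just established, and the given $bab=b$; therefore $a^{\tiny\textcircled{\tiny\#}}=b$.

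I expect the identity $ba=aa^{\#}$ to be the heart of the matter: it is what converts the ostensibly weaker data (only $bab=b$ and $ba^{2}=a$, with no $aba=a$ assumed outright) into the full $\{1,3\}$-inverse property. The order is genuinely forced --- one cannot treat $b$ as an inner inverse of $a$, nor invoke Lemma \ref{Gi-conditions} for the invertibility of $a+1-ba$, until $aba=a$ has first been obtained from $ba=aa^{\#}$. An alternative route paralleling Theorem \ref{Thr-conditions-cor1} even more literally would, after securing $aba=a$, apply Lemma \ref{Gi-conditions} to invert $a+1-ba$ and then compute $a^{\tiny\textcircled{\tiny\#}}$ via a resolvent-type formula; but the argument above through Theorem \ref{coreinverse2} is shorter and sidesteps the inversion altogether.
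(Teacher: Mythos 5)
Your proposal is correct and follows essentially the same route as the paper: deduce $a\in R^\#$ from $a=ba^{2}\in Ra^{2}$ together with the hypothesis $aR=a^{2}R$ via \cite[Proposition 7]{H}, establish the key identity $ba=aa^\#$ by right-multiplying $ba^{2}=a$ by $a^\#$, conclude $aba=a$ so that $b\in a\{1,3\}$, and finish with $a^{\tiny\textcircled{\tiny\#}}=a^\#aa^{(1,3)}=aa^\#b=bab=b$ from Theorem \ref{coreinverse2}. No gaps.
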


\begin{proof}
$(1)\Rightarrow(2)$ It is trivial by Lemma \ref{five-equations-yy}.

$(2)\Rightarrow(1)$
Suppose $bab=b$, $(ab)^{\ast}=ab$ and $ba^{2}=a$, then $a=ba^{2}\in Ra^{2}$, thus
$a\in R^\#$ by the hypothesis $aR=a^{2}R$ and  \cite[Proposition 7]{H}.
Post multiplication by $a^\#$ on equation $ba^{2}=a$ yields $ba=aa^\#$, thus $$aba=a^{2}a^\#=a.$$
Hence $b\in a\{1,3\}$ by $(ab)^{\ast}=ab$.
By Theorem \ref{coreinverse2}, we have $a^{\tiny{\textcircled{\tiny\#}}}=a^\#aa^{(1,3)}$. Therefore
$$a^{\tiny\textcircled{\tiny\#}}=a^\#aa^{(1,3)}=aa^\#b=bab=b.$$
\end{proof}

\begin{eg} \label{conter2}
The condition $aR=a^{2}R$ in Theorem $\ref{Thr-conditions-cor2}$ cannot be dropped.
Let $R$ be the ring of all bi-finite matrices with entries over field $\mathbb{R}$ with transpose as involution.
Consider the following matrices $A,B$ over $R$.
$A=\sum\limits_{i=1}^{\infty}e_{i+1,i}$ and
$B=A^{\ast}$
, then
$AB=\sum\limits_{i=2}^{\infty}e_{i,i}$,
$BA=I$, thus $BAB=B$, $(AB)^{\ast}=AB$ and $BA^{2}=A$. Since
$A$ is not group invertible, whence $A$ is not core invertible.
\end{eg}

By Theorem \ref{Thr-conditions-cor1} and Theorem \ref{Thr-conditions-cor2}, we have the following  corollary.

\begin{cor} \label{Thr-conditions}
Let $a,b\in R$. If $a\in R^\#$, then the following conditions are equivalent:\\
$(1)$ $a\in R^{\tiny\textcircled{\tiny\#}}$ with core inverse $b$;\\
$(2)$ $aba=a$, $ab^{2}=b$, $(ab)^{\ast}=ab$;\\
$(3)$ $bab=b$, $ba^{2}=a$, $(ab)^{\ast}=ab$.
\end{cor}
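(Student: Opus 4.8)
The plan is to deduce the corollary directly from Theorem \ref{Thr-conditions-cor1} and Theorem \ref{Thr-conditions-cor2}, whose one-sided hypotheses $Ra=Ra^{2}$ and $aR=a^{2}R$ become automatically available once $a$ is assumed to be group invertible. So the first---and essentially the only---thing I would verify is that $a\in R^\#$ forces both of these equalities to hold simultaneously.

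For this, let $a^\#$ denote the group inverse of $a$, so that $aa^\#a=a$ and $aa^\#=a^\#a$. Then $a=aa^\#a=a^\#a^{2}$ shows $a\in Ra^{2}$, whence $Ra\subseteq Ra^{2}$; since $Ra^{2}\subseteq Ra$ always holds, we obtain $Ra=Ra^{2}$. Symmetrically, $a=aa^\#a=a^{2}a^\#$ gives $a\in a^{2}R$, and with the trivial inclusion $a^{2}R\subseteq aR$ this yields $aR=a^{2}R$. Thus group invertibility is precisely the two-sided strengthening that activates both earlier theorems at once.

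With these observations in hand the corollary is immediate. Since $Ra=Ra^{2}$, Theorem \ref{Thr-conditions-cor1} gives the equivalence of (1) and (2), its condition $aba=a$, $(ab)^{\ast}=ab$, $ab^{2}=b$ being exactly our condition (2). Since $aR=a^{2}R$, Theorem \ref{Thr-conditions-cor2} gives the equivalence of (1) and (3), its condition $bab=b$, $(ab)^{\ast}=ab$, $ba^{2}=a$ matching our condition (3). Chaining these two equivalences through (1) shows that (1), (2) and (3) are mutually equivalent. I do not expect any genuine obstacle: the substance lies entirely in the remark that $a\in R^\#$ supplies both one-sided regularity conditions, and the supporting calculations collapse to the defining identities of $a^\#$.
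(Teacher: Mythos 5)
Your proposal is correct and follows exactly the paper's route: the corollary is stated there as an immediate consequence of Theorems \ref{Thr-conditions-cor1} and \ref{Thr-conditions-cor2}, and your only added content --- that $a\in R^\#$ yields both $Ra=Ra^{2}$ (from $a=a^\#a^{2}$) and $aR=a^{2}R$ (from $a=a^{2}a^\#$) --- is precisely the observation that makes both theorems applicable. Nothing further is needed.
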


\begin{lem} \cite{PH1} \label{a=aa-aa=a}
Let $a\in R$, if $a$ is regular and $a^{-}\in a\{1\}$, then we have:\\
$(1)$ $aR=a^{2}R$ if and only if $U_{1}=a+1-aa^{-}$ is right invertible if and only if $V_{1}=a+1-a^{-}a$ is right invertible;\\
$(2)$ $Ra=Ra^{2}$ if and only if $U_{2}=a+1-aa^{-}$ is left invertible if and only if $V_{2}=a+1-a^{-}a$ is left invertible.
\end{lem}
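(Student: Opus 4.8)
The plan is to reduce everything to two ingredients: a direct one-sided computation for $V_{1}=a+1-a^{-}a$, and a transfer to $U_{1}=a+1-aa^{-}$ by Jacobson's lemma. First I would record the standard reduction $aR=a^{2}R\iff a\in a^{2}R$ (and dually $Ra=Ra^{2}\iff a\in Ra^{2}$), together with the idempotents $e=aa^{-}$, $f=a^{-}a$ and the elementary identities $ea=a=af$, $aV_{1}=a^{2}=U_{1}a$, $V_{1}f=a$, $V_{1}(1-f)=1-f$ and $eU_{1}=a$, all of which follow at once from $aa^{-}a=a$.

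Next I would prove the base equivalence $aR=a^{2}R\iff V_{1}$ is right invertible. For the forward implication, if $V_{1}t=1$ then $a=aV_{1}t=a^{2}t\in a^{2}R$, using $aV_{1}=a^{2}$. For the converse, writing $a=a^{2}c$ I would check $fac=a^{-}a^{2}c=a^{-}a=f$, so that $1-ac\in(1-f)R$; since $V_{1}f=a$ and $V_{1}(1-f)=1-f$, the element $t=fc+1-ac$ satisfies $V_{1}t=ac+(1-ac)=1$, giving an explicit right inverse. The key device for passing from $V_{1}$ to $U_{1}$ is the factorization $U_{1}=1-a(a^{-}-1)$ and $V_{1}=1-(a^{-}-1)a$; setting $x=a$ and $y=a^{-}-1$ these read $U_{1}=1-xy$ and $V_{1}=1-yx$. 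By the one-sided Jacobson lemma (if $(1-xy)s=1$ then $(1-yx)(1+ysx)=1$, and symmetrically), $1-xy$ is right invertible exactly when $1-yx$ is, and likewise for left invertibility. Hence $U_{1}$ is right invertible iff $V_{1}$ is, which closes part $(1)$.

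Part $(2)$ is the left-handed mirror. Using $U_{1}a=a^{2}$ one gets $U_{1}$ left invertible $\Rightarrow a\in Ra^{2}$, and the same Jacobson pairing transfers left invertibility between $U_{1}$ and $V_{1}$, so the four conditions match up. The cleanest way to obtain all of $(2)$, however, is to pass to the opposite ring $R^{\mathrm{op}}$: there right ideals become left ideals, right invertibility becomes left invertibility, squares are unchanged, and the idempotents $aa^{-}$ and $a^{-}a$ exchange roles, so the $V_{1}$-statement of $(1)$ read in $R^{\mathrm{op}}$ is precisely the $U_{1}$-statement of $(2)$ (and conversely). Thus $(2)$ requires no new computation once $(1)$ is in hand.

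The routine identities and the reductions to $a\in a^{2}R$ are immediate; I expect the genuine work to be the reverse construction, namely producing the explicit one-sided inverse $t=fc+1-ac$ from a factorization of $a$, since this is where the hypothesis $aR=a^{2}R$ is actually consumed. The one subtlety to get right is the bookkeeping in the Jacobson transfer, in particular verifying that $1+ysx$ is indeed a one-sided inverse of $1-yx$; this is the step that bridges the $U$- and $V$-versions and renders harmless the asymmetry between $U_{1}$ (naturally adapted to left ideals) and $V_{1}$ (naturally adapted to right ideals).
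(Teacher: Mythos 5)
Your proof is correct. Note that the paper itself offers no proof of this lemma: it is quoted verbatim from Puystjens--Hartwig [PH1] (and is the one-sided analogue of Lemma \ref{Gi-conditions}, likewise cited without proof), so there is no in-paper argument to compare against. Your two ingredients are sound and complete: the direct computation showing $V_{1}=a+1-a^{-}a$ is right invertible iff $a\in a^{2}R$ (the explicit right inverse $t=fc+1-ac$ with $f=a^{-}a$ and $a=a^{2}c$ does check out, since $V_{1}fc=ac$ and $V_{1}(1-ac)=(1-f)(1-ac)=1-ac$), and the one-sided Jacobson transfer via $U_{1}=1-a(a^{-}-1)$, $V_{1}=1-(a^{-}-1)a$, which correctly preserves right (resp.\ left) invertibility in both directions. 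The passage to $R^{\mathrm{op}}$ for part $(2)$ is also legitimate: $a^{-}$ remains an inner inverse there, the idempotents $aa^{-}$ and $a^{-}a$ swap, and right invertibility becomes left invertibility, so $(2)$ is exactly $(1)$ read in the opposite ring. This is the same circle of ideas used in the cited source, so nothing further is needed.
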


A ring $R$ is called direct finite, if $ab=1$ implies $ba=1$ for any $a,b\in R.$

\begin{thm} \label{direct-finte}
The following conditions are equivalent:\\
$(1)$ $R$ is a direct finite ring;\\
$(2)$ Let $a,b\in R$, then $aba=a$, $(ab)^{\ast}=ab$, $ab^{2}=b$ if and only if $a\in R^{\tiny\textcircled{\tiny\#}}$ with $a^{\tiny\textcircled{\tiny\#}}=b$.
\end{thm}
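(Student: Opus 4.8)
The plan is to prove the two implications separately, noting first that in statement $(2)$ the ``if'' direction (core invertibility with $a^{\tiny\textcircled{\tiny\#}}=b$ forcing the three equations) is unconditionally true: it consists of three of the five equations in Lemma \ref{five-equations-yy}. Hence direct finiteness is needed only to secure the ``only if'' direction, and that is where the real work lies.

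For $(1)\Rightarrow(2)$ I would assume $R$ is direct finite and that $aba=a$, $(ab)^{\ast}=ab$, $ab^{2}=b$, and aim to meet the hypotheses of Theorem \ref{Thr-conditions-cor1}. First I record that $b\in a\{1\}$ together with $(ab)^{\ast}=ab$, so $a$ is regular with inner inverse $b$ and $a\in R^{\{1,3\}}$. Next I extract the one-sided column condition: substituting $b=ab^{2}$ into $a=aba$ gives $a=a^{2}b^{2}a\in a^{2}R$, whence $aR=a^{2}R$. By Lemma \ref{a=aa-aa=a}$(1)$, applied with $a^{-}=b$ so that $aa^{-}=ab$, this makes $U=a+1-ab$ right invertible. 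The decisive step is then to invoke direct finiteness: if $UV=1$, then $VU=1$, so $U$ is invertible and in particular left invertible, and Lemma \ref{a=aa-aa=a}$(2)$ yields $Ra=Ra^{2}$. With $Ra=Ra^{2}$ in hand, Theorem \ref{Thr-conditions-cor1} applies verbatim to the three equations and delivers $a\in R^{\tiny\textcircled{\tiny\#}}$ with core inverse $b$.

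For $(2)\Rightarrow(1)$ I would argue by specializing the hypothesis to a witness of one-sided invertibility. Suppose $ab=1$; then the three equations hold automatically, since $aba=(ab)a=a$, $(ab)^{\ast}=1^{\ast}=1=ab$, and $ab^{2}=(ab)b=b$. Statement $(2)$ now forces $a\in R^{\tiny\textcircled{\tiny\#}}$ with $a^{\tiny\textcircled{\tiny\#}}=b$, so by Lemma \ref{five-equations-yy} we also have $ba^{2}=a$. Right-multiplying $ba^{2}=a$ by $b$ gives $ba^{2}b=ab=1$; since $ba^{2}b=ba(ab)=ba$, we conclude $ba=1$. Thus $ab=1$ implies $ba=1$, i.e.\ $R$ is direct finite.

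The main obstacle is the single upgrade inside $(1)\Rightarrow(2)$: the three equations by themselves only yield the ``column'' identity $aR=a^{2}R$ (equivalently, right invertibility of $a+1-ab$), whereas core invertibility through Theorem \ref{Thr-conditions-cor1} needs the ``row'' identity $Ra=Ra^{2}$. Direct finiteness is precisely the hypothesis that closes this left--right gap, by promoting right invertibility of $a+1-ab$ to genuine invertibility; conversely, Examples \ref{conter1} and \ref{conter2} show that in the (non-direct-finite) ring of bi-finite matrices this promotion fails, which is exactly why the equivalence cannot survive the removal of direct finiteness.
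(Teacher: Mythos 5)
Your proof is correct and follows essentially the same route as the paper: both directions hinge on the same two ideas, namely using direct finiteness to promote the right invertibility of $a+1-ab$ (obtained from $a=a^{2}b^{2}a\in a^{2}R$ via Lemma \ref{a=aa-aa=a}) to genuine invertibility, and specializing statement $(2)$ to a pair with $ab=1$ for the converse. The only cosmetic differences are that you pass through $Ra=Ra^{2}$ and Theorem \ref{Thr-conditions-cor1} where the paper concludes $a\in R^{\#}$ from Lemma \ref{Gi-conditions} and invokes Corollary \ref{Thr-conditions}, and in $(2)\Rightarrow(1)$ you compute $ba=ba^{2}b=ab=1$ directly from $ba^{2}=a$ rather than via the group inverse.
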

\begin{proof}
$(1)\Rightarrow(2)$ Suppose $a\in R^{\tiny\textcircled{\tiny\#}}$ with $a^{\tiny\textcircled{\tiny\#}}=b$,
then $aba=a$, $(ab)^{\ast}=ab$, $ab^{2}=b$ by Lemma \ref{five-equations-yy}.
Conversely, suppose $R$ is a direct finite ring and
$aba=a$, $(ab)^{\ast}=ab$, $ab^{2}=b$, then $b\in a\{1,3\}$.
Since $$a=aba=a(ab^{2})a=a^{2}b^{2}a\in a^{2}R,$$ so
$a+1-ab$ is right invertible by Lemma \ref{a=aa-aa=a}. Thus
$a+1-ab$ is invertible by $R$ is a direct finite ring. By Lemma \ref{Gi-conditions}, we have $a\in R^\#$, therefore $a^{\tiny\textcircled{\tiny\#}}=b$
by Corollary \ref{Thr-conditions}.

$(2)\Rightarrow(1)$ For arbitrary $m,n\in R$, if $mn=1$, then $mnm=m$, $(mn)^{\ast}=mn$, $mn^{2}=n$, thus by the hypothetical condition, we have
$m\in R^{\tiny\textcircled{\tiny\#}}$ with $m^{\tiny\textcircled{\tiny\#}}=n$, then $m\in R^\#$ by Lemma \ref{five-equations}.
Post-multiplication by $n$ on equation $m=mm^\#m$ yields
$$1=mn=mm^\#mn=mm^\#,$$
thus, $mm^\#=m^\#m=1$ which imply $m$ is invertible. Then $nm=1$, so $R$ is a direct finite ring.
\end{proof}

\begin{thm}
Consider the following  conditions:\\
$(1)$ $R$ is a direct finite ring;\\
$(2)$ Let $a,b\in R$, if $bab=b$, $(ab)^{\ast}=ab$ and $ba^{2}=a$, then $a\in R^{\tiny\textcircled{\tiny\#}}$ with $a^{\tiny\textcircled{\tiny\#}}=b$;\\
$(3)$ Let $a,b\in R$, if $aba=a$, $(ab)^{\ast}=ab$ and $ba^{2}=a$, then $a\in R^{\tiny\textcircled{\tiny\#}}$ with $a^{\tiny\textcircled{\tiny\#}}=bab$;\\
$(4)$ Let $a\in R$, if $a^{\ast}a=1$, then $aa^{\ast}=1.$

Then, we have $(1)\Rightarrow(2)\Rightarrow(3)\Rightarrow(4).$
\end{thm}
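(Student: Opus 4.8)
The plan is to prove the three implications separately, with essentially all of the work concentrated in $(1)\Rightarrow(2)$; the other two are short substitutions into the hypotheses already available.

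For $(1)\Rightarrow(2)$ I would assume $R$ is direct finite together with $bab=b$, $(ab)^{\ast}=ab$, $ba^{2}=a$, and first record the routine consequences: multiplying $bab=b$ by $a$ gives $abab=ab$, so $p:=ab$ is a self-adjoint idempotent, while $e:=ba$ is idempotent and $ba^{2}=a$ yields $ea=a$, $bp=b$, $ep=p$. The heart of the argument is the identity
$$(b+1-ba)(a+1-ab)=1,$$
which I would verify by direct expansion using exactly these relations. Hence $N:=a+1-ab$ is left invertible, and since $R$ is direct finite, $N$ is invertible with $N^{-1}=b+1-ba$. The hard part is that Lemma \ref{Gi-conditions} requires an inner inverse of $a$, yet the hypotheses do not hand us $aba=a$; in fact the relations are symmetric enough that $aba=a$ is itself equivalent to group invertibility, so it cannot be extracted without invoking direct finiteness. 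The trick is to feed the invertibility of $N$ back in: a second expansion gives $N^{-1}(1-ab)=1-ba$, whence $1-ab=N(1-ba)$ and therefore
$$(1-ab)a=N(1-ba)a=N(a-ba^{2})=N(a-a)=0,$$
so $aba=a$. Thus $b$ is an inner inverse of $a$, and $N=a+1-aa^{-}$ is invertible, so $a\in R^{\#}$ by Lemma \ref{Gi-conditions}. Since $a\in R^{\#}$ and $bab=b$, $ba^{2}=a$, $(ab)^{\ast}=ab$, Corollary \ref{Thr-conditions} gives $a\in R^{\tiny\textcircled{\tiny\#}}$ with $a^{\tiny\textcircled{\tiny\#}}=b$. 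I expect this paragraph to be the main obstacle, precisely because of the need to route through direct finiteness to recover $aba=a$.

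For $(2)\Rightarrow(3)$ I would apply the statement in $(2)$ to the pair $(a,\,bab)$. Writing $v=bab$, the hypotheses $aba=a$ and $(ab)^{\ast}=ab$ again make $ab$ a self-adjoint idempotent, and short reductions using $aba=a$ and $ba^{2}=a$ give $av=abab=ab$ (self-adjoint), $va^{2}=bab\,a^{2}=a$, and $vav=v$. These are exactly the hypotheses of $(2)$ for $(a,v)$, so $(2)$ yields $a\in R^{\tiny\textcircled{\tiny\#}}$ with $a^{\tiny\textcircled{\tiny\#}}=v=bab$, which is the conclusion of $(3)$.

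For $(3)\Rightarrow(4)$ I would apply $(3)$ to the pair $(a,\,a^{\ast})$ under the assumption $a^{\ast}a=1$. One checks $aa^{\ast}a=a$, $(aa^{\ast})^{\ast}=aa^{\ast}$ and $a^{\ast}a^{2}=a$, so $(3)$ gives $a\in R^{\tiny\textcircled{\tiny\#}}$ with $a^{\tiny\textcircled{\tiny\#}}=a^{\ast}aa^{\ast}=a^{\ast}$; in particular $a\in R^{\#}$ by Lemma \ref{five-equations}. Left-multiplying $aa^{\#}a=a$ by $a^{\ast}$ gives $a^{\#}a=1$, and since $a^{\#}a=aa^{\#}$ we get $aa^{\#}=1$; thus $a$ is invertible with $a^{-1}=a^{\#}$, so $a^{\ast}=a^{-1}$ and $aa^{\ast}=1$, establishing $(4)$.
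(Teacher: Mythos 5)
Your proof is correct, and the last two implications coincide with the paper's own argument (the paper also substitutes $c=bab$ into $(2)$, and also takes $b=a^{\ast}$ in $(3)$). The interesting divergence is in $(1)\Rightarrow(2)$. The paper works on $b$ rather than $a$: from $b=bab=b^{2}a^{2}b\in b^{2}R$ and the fact that $a$ is an inner inverse of $b$, Lemma \ref{a=aa-aa=a} makes $b+1-ba$ right invertible, direct finiteness upgrades this to invertibility, Lemma \ref{Gi-conditions} gives $b\in R^{\#}$, and then $b^{\#}b=ab$ yields $ab^{2}=b$, at which point Theorem \ref{coreinverse1} (the three equations $(ab)^{\ast}=ab$, $ba^{2}=a$, $ab^{2}=b$) finishes without ever needing $aba=a$ or $a\in R^{\#}$. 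You instead exhibit the explicit identity $(b+1-ba)(a+1-ab)=1$, invert $N=a+1-ab$ by direct finiteness, and then extract $aba=a$ from $1-ab=N(1-ba)$ and $ba^{2}=a$, so that Lemma \ref{Gi-conditions} applies to $a$ itself and Corollary \ref{Thr-conditions} closes the argument. Your route is self-contained (it bypasses Lemma \ref{a=aa-aa=a} entirely, since the one-sided inverse is written down explicitly) at the cost of the extra step recovering $aba=a$; the paper's route is shorter because Theorem \ref{coreinverse1} needs no regularity of $a$ as a hypothesis, but it leans on the cited lemma about $bR=b^{2}R$. Both are valid proofs of the theorem.
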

\begin{proof}
$(1)\Rightarrow(2)$
Suppose $R$ is a direct finite ring, by $bab=b$ and $ba^{2}=a$, we have
$$b=bab=b(ba^{2})b=b^{2}a^{2}b\in b^{2}R.$$
Since $b=bab$ gives $a\in b\{1\}$, by Lemma \ref{a=aa-aa=a},
$b+1-ba$ is right invertible. As $R$ is a direct finite ring, hence
$b+1-ba$ is invertible, which gives $b\in R^\#$ by Lemma \ref{Gi-conditions}.
Pre-multiplication of $b=b^{2}a^{2}b$ by $b^\#$ now yields $b^\#b=ab$, thus
$$ab^{2}=b^\#b^{2}=b.$$
Whence by Theorem \ref{coreinverse1}, we have $a\in R^{\tiny\textcircled{\tiny\#}}$ with $a^{\tiny\textcircled{\tiny\#}}=b$.

$(2)\Rightarrow(3)$
Let $c=bab$, by $bab=b$, $(ab)^{\ast}=ab$ and $ba^{2}=a$, we have
$cac=c$, $(ac)^{\ast}=ac$ and $ca^{2}=a$, thus by $(2)$, we have
$a\in R^{\tiny\textcircled{\tiny\#}}$ with $a^{\tiny\textcircled{\tiny\#}}=c=bab$.

$(3)\Rightarrow(4)$
Suppose $a^{\ast}a=1$, let $b=a^{\ast}$, then $aba=a$, $(ab)^{\ast}=ab$ and $ba^{2}=a$.
By $(3)$, we have $a\in R^{\tiny\textcircled{\tiny\#}}$, thus
$1=a^{\ast}a=a^{\ast}aa^{\tiny\textcircled{\tiny\#}}a=a^{\tiny\textcircled{\tiny\#}}a=
a(a^{\tiny\textcircled{\tiny\#}})^{2}a$, That is $a$ is invertible, which gives $aa^{\ast}=1$.
\end{proof}

\begin{prop} \label{xR-littile-aR}
Let $a,b\in R$. If $aR=a^{2}R$, then the following conditions are equivalent:\\
$(1)$ $a\in R^{\tiny\textcircled{\tiny\#}}$ with $a^{\tiny\textcircled{\tiny\#}}=b$;\\
$(2)$ $ba^{2}=a$, $(ab)^{\ast}=ab$, $bR\subseteq aR$;\\
$(3)$ $a\in R^{\{1,3\}}$ and satisfies $ba^{2}=a$, $b=ba\hat{a}$ for some $\hat{a}\in a\{1,3\}$.
\end{prop}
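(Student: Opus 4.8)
The plan is to prove the cycle $(1)\Rightarrow(2)\Rightarrow(3)\Rightarrow(1)$, routing the last implication through Theorem \ref{Thr-conditions-cor2}, since that theorem already characterizes the core inverse under the very hypothesis $aR=a^{2}R$ assumed here. The computational engine common to all three implications is the following: whenever $ba^{2}=a$ holds, we have $a\in Ra^{2}$, so together with $aR=a^{2}R$ and \cite[Proposition 7]{H} we get $a\in R^\#$; post-multiplying $ba^{2}=a$ by $a^\#$ and using $a^{2}a^\#=a$ then yields the key identity $ba=aa^\#$. I would set this up once and reuse it.

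For $(1)\Rightarrow(2)$, Lemma \ref{five-equations-yy} immediately gives $ba^{2}=a$ and $(ab)^{\ast}=ab$, while the equation $b=ab^{2}$ shows $b\in aR$, hence $bR\subseteq aR$. For $(2)\Rightarrow(3)$, I would first apply the engine to obtain $a\in R^\#$ and $ba=aa^\#$. Then $aba=a\cdot aa^\#=a$, which combined with $(ab)^{\ast}=ab$ shows $b\in a\{1,3\}$; in particular $a\in R^{\{1,3\}}$. It remains to produce the required $\hat{a}$, and I claim $\hat{a}=b$ works. Writing $b=ac$ (possible since $bR\subseteq aR$ forces $b\in aR$), one computes $aa^\#b=aa^\#ac=ac=b$, so $bab=(ba)b=aa^\#b=b$. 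Thus $b=bab=ba\hat{a}$ with $\hat{a}=b\in a\{1,3\}$, giving $(3)$.

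For $(3)\Rightarrow(1)$, the engine again yields $a\in R^\#$ and $ba=aa^\#$, so $b=ba\hat{a}=aa^\#\hat{a}$. The crucial observation is that $ab=a\cdot aa^\#\hat{a}=a\hat{a}$; since $\hat{a}\in a\{1,3\}$ makes $a\hat{a}$ self-adjoint, we obtain $(ab)^{\ast}=ab$. Moreover $bab=(ba)b=aa^\#b=aa^\#\cdot aa^\#\hat{a}=aa^\#\hat{a}=b$. Hence $bab=b$, $(ab)^{\ast}=ab$ and $ba^{2}=a$ all hold, and Theorem \ref{Thr-conditions-cor2} delivers $a\in R^{\tiny\textcircled{\tiny\#}}$ with $a^{\tiny\textcircled{\tiny\#}}=b$.

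The main obstacle is the verification of the self-adjointness condition $(ab)^{\ast}=ab$ in $(3)\Rightarrow(1)$ and of $bab=b$ in $(2)\Rightarrow(3)$: neither is a hypothesis, and both hinge first on upgrading $ba^{2}=a$ to the identity $ba=aa^\#$ via the group inverse, and then on the reductions $ab=a\hat{a}$ (respectively the membership $b\in aR$ extracted from $bR\subseteq aR$). Once these identities are in place the rest is bookkeeping, and routing $(3)\Rightarrow(1)$ through Theorem \ref{Thr-conditions-cor2} lets me avoid re-deriving the core-inverse formula from scratch.
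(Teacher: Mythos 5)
Your proposal is correct and uses essentially the same ingredients as the paper: $a\in a^{2}R\cap Ra^{2}$ giving $a\in R^{\#}$, the identity $ba=aa^{\#}$ obtained by post-multiplying $ba^{2}=a$ by $a^{\#}$, and Theorem \ref{Thr-conditions-cor2}. The only difference is organizational — you prove the cycle $(1)\Rightarrow(2)\Rightarrow(3)\Rightarrow(1)$ with the explicit witness $\hat{a}=b$ in $(2)\Rightarrow(3)$, while the paper shows $(2)$ and $(3)$ each equivalent to $(1)$ separately — and all your verifications check out.
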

\begin{proof}
$(1)\Rightarrow(2)$
Suppose $a\in R^{\tiny\textcircled{\tiny\#}}$ with $a^{\tiny\textcircled{\tiny\#}}=b$, then
$ba^{2}=a$, $(ab)^{\ast}=ab$ and $b=ab^{2}$ by Lemma \ref{five-equations-yy}, thus $bR\subseteq aR.$

$(1)\Rightarrow(3)$  Suppose $a\in R^{\tiny\textcircled{\tiny\#}}$ with $a^{\tiny\textcircled{\tiny\#}}=b$, then
$ba^{2}=a$ by Lemma \ref{five-equations-yy}.
By Theorem \ref{coreinverse2}, we have
$$a^{\tiny{\textcircled{\tiny\#}}}=a^\#aa^{(1,3)}=(a^\#aa^{(1,3)})aa^{(1,3)}=a^{\tiny{\textcircled{\tiny\#}}}aa^{(1,3)}=baa^{(1,3)}.$$

$(2)\Rightarrow(1)$  Suppose $ba^{2}=a$, $(ab)^{\ast}=ab$, $bR\subseteq aR$, then $b=ax$, for some $x\in R$.
By $ba^{2}=a$, we have $b=ax=ba^{2}x=bab.$
Therefore, $a\in R^{\tiny\textcircled{\tiny\#}}$ with $a^{\tiny\textcircled{\tiny\#}}=b$ by Theorem \ref{Thr-conditions-cor2}.

$(3)\Rightarrow(1)$ Suppose $ba^{2}=a$, $b=ba\hat{a}$, for some $\hat{a}\in a\{1,3\}$ and $aR=a^{2}R$, thus $a\in R^\#$ by $a\in a^{2}R\cap Ra^{2}$. Hence
$$b=ba\hat{a}=ba^{2}a^\#\hat{a}=aa^\#\hat{a}=a^{\tiny{\textcircled{\tiny\#}}}$$
by Theorem \ref{coreinverse2}.
\end{proof}

\section{ The core invertibility of the sum of two core invertible elements }\label{a}

In this section, we will show that if two core invertible elements $a,b\in R$ satisfy $ab=0$ and $a^{\ast}b=0$, then $a+b$ is core invertible. Moreover, the explicit expression of the sum of two core invertible elements is presented.

In 1958 Drazin \cite{D} proved that if
two group invertible elements $a,b\in R$ satisfy $ab=0=ba$, then $a+b$ is group invertible.
Chen, Zhuang and Wei in \cite{CZW} gave a generalization of this result in the case of $ab=0$.
This generalization also can be found in \cite[Theorem 2.1]{BXT}.

\begin{lem} \cite{CZW} \label{additive-group}
Let $a,~b\in R$ be group invertible with $ab=0$, then $a+b$ is group invertible.
In this case,$$(a+b)^\#=(1-bb^\#)a^\#+b^\#(1-aa^\#).$$
\end{lem}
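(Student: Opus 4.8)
The plan is to exhibit the element $x = (1-bb^\#)a^\# + b^\#(1-aa^\#)$ as an explicit candidate and verify directly that it satisfies the three defining equations of the group inverse, namely $(a+b)x(a+b) = a+b$, $x(a+b)x = x$ and $(a+b)x = x(a+b)$; uniqueness of the group inverse then forces $(a+b)^\# = x$, which simultaneously establishes that $a+b\in R^\#$ and yields the stated formula. Writing $s = a+b$, everything reduces to bookkeeping with the idempotents $aa^\# = a^\# a$ and $bb^\# = b^\# b$ once the correct cancellation rules are in place.

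First I would extract the consequences of the single hypothesis $ab = 0$. Using $a^\# = (a^\#)^2 a$ and $b^\# = b(b^\#)^2$, one gets $a^\# b = (a^\#)^2(ab) = 0$, then $ab^\# = (ab)(b^\#)^2 = 0$, and hence $a^\# b^\# = (a^\#)^2(ab^\#) = 0$. In other words, $ab = 0$ propagates to the vanishing of all four products of a factor from $\{a, a^\#\}$ with a factor from $\{b, b^\#\}$, and consequently $aa^\# b = 0$, $aa^\# b^\# = 0$ and $aa^\# bb^\# = 0$. Setting $p = aa^\#$ and $q = bb^\#$, these read $pb = pb^\# = 0$ and $pq = 0$; I would emphasise the asymmetry that $qp = bb^\# aa^\#$ need \emph{not} vanish, so this product must be carried through the computation rather than discarded.

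With these rules the three verifications become short. Expanding $sx$ and $xs$ and deleting every term that contains a forbidden product shows $sx = xs = p + q - qp$; the coincidence of the two already gives the commutativity equation. Multiplying this common idempotent $e := p+q-qp$ by $s$ on the right, using $pa = a$, $qb = b$, $pb = 0$ together with $qpa = qa$ and $qpb = 0$, collapses $es$ to $a+b$, giving $sxs = s$. Finally, left-multiplying the expression for $x$ by $e$ and invoking $pa^\# = a^\#$, $qb^\# = b^\#$, $pb^\# = 0$ and $pq = 0$ returns $ex = x$, whence $xsx = (xs)x = ex = x$.

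The only genuine subtlety, and what I expect to be the main obstacle, is that the relations coming from $ab = 0$ are strictly one-sided: products with the ``$a$-factor'' standing to the left of the ``$b$-factor'' vanish, but the reversed product $qp = bb^\# aa^\#$ survives. Every one of the three checks must therefore keep this surviving term and rely on its \emph{cancellation} — the identities $ea = a$ and $eb = b$ both hold even though $qp\neq 0$ — rather than on its disappearance. Once the ordering of these products is tracked consistently, the remaining algebra is entirely routine.
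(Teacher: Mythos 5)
Your proposal is correct: all the derived annihilation rules ($a^\#b=ab^\#=a^\#b^\#=0$, hence $pb=pb^\#=pq=0$ while $qp$ survives) are valid, and the three computations $sx=xs=p+q-qp$, $es=s$ and $ex=x$ check out, so uniqueness of the group inverse gives the stated formula. Note that the paper itself offers no proof of this lemma --- it is quoted from the reference [CZW] --- so your direct verification is a complete, self-contained substitute, and you correctly identified the one genuine subtlety, namely that the surviving term $qp=bb^\#aa^\#$ must be cancelled (via $qpa=qa$ and $qpa^\#=qa^\#$) rather than assumed to vanish.
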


\begin{eg} \label{example1}
In $\mathbb{Z}_{8}$ with $x^{\ast}=x$ for all $x\in \mathbb{Z}_{8}$, both $a=\bar{1}$ and $b=\bar{3}$ are core invertible, but $a+b\notin R^{\tiny{\textcircled{\tiny\#}}}$.
\end{eg}

\begin{thm} \label{core-additive1}
Let $a,b\in R^{\tiny{\textcircled{\tiny\#}}}$ with core inverses $a^{\tiny{\textcircled{\tiny\#}}}$ and $b^{\tiny{\textcircled{\tiny\#}}}$, respectively. If $ab=0$ and $a^{\ast}b=0$, then $a+b\in R^{\tiny{\textcircled{\tiny\#}}}$.\\
Moreover,
$$(a+b)^{\tiny{\textcircled{\tiny\#}}}=b^{\pi}a^{\tiny{\textcircled{\tiny\#}}}+b^{\tiny{\textcircled{\tiny\#}}},$$
where $b^{\pi}=1-b^{\tiny{\textcircled{\tiny\#}}}b$.
\end{thm}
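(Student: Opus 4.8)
The plan is to verify the three--equation characterization of Theorem \ref{coreinverse1} for the element $c=a+b$ with the candidate $y=b^{\pi}a^{\tiny{\textcircled{\tiny\#}}}+b^{\tiny{\textcircled{\tiny\#}}}$, i.e.\ to show $(cy)^{\ast}=cy$, $yc^{2}=c$ and $cy^{2}=y$. The whole argument rests on a handful of orthogonality relations that I would extract first. Applying the involution to $a^{\ast}b=0$ gives $b^{\ast}a=0$. Since $Ra^{\tiny{\textcircled{\tiny\#}}}=Ra^{\ast}$ by the definition of the core inverse, we may write $a^{\tiny{\textcircled{\tiny\#}}}=sa^{\ast}$, whence $a^{\tiny{\textcircled{\tiny\#}}}b=sa^{\ast}b=0$; symmetrically $Rb^{\tiny{\textcircled{\tiny\#}}}=Rb^{\ast}$ together with $b^{\ast}a=0$ gives $b^{\tiny{\textcircled{\tiny\#}}}a=0$. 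Finally, $b^{\tiny{\textcircled{\tiny\#}}}\in bR$ lets us write $b^{\tiny{\textcircled{\tiny\#}}}=bv$, so that $ab^{\tiny{\textcircled{\tiny\#}}}=abv=0$ and hence also $ab^{\tiny{\textcircled{\tiny\#}}}b=0$. These four identities, together with the five defining equations of Lemma \ref{five-equations-yy} applied to $a$ and to $b$, are the only tools I expect to need.

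With these in hand the first and third equations are short. From $ab^{\tiny{\textcircled{\tiny\#}}}b=0$ we get $ab^{\pi}=a$, while $bb^{\pi}=b-bb^{\tiny{\textcircled{\tiny\#}}}b=0$; combining these with $ab^{\tiny{\textcircled{\tiny\#}}}=0$ collapses $cy=(a+b)y$ to $cy=aa^{\tiny{\textcircled{\tiny\#}}}+bb^{\tiny{\textcircled{\tiny\#}}}$. Each summand is Hermitian by the third of the five equations, so $(cy)^{\ast}=cy$ is immediate. For the third equation I would reuse this value of $cy$ and compute $cy^{2}=(cy)y$; using $a^{\tiny{\textcircled{\tiny\#}}}b=0$ and $b^{\tiny{\textcircled{\tiny\#}}}a=0$ to annihilate the cross terms and $a(a^{\tiny{\textcircled{\tiny\#}}})^{2}=a^{\tiny{\textcircled{\tiny\#}}}$, $b(b^{\tiny{\textcircled{\tiny\#}}})^{2}=b^{\tiny{\textcircled{\tiny\#}}}$ to simplify the rest, the expression reduces to $a^{\tiny{\textcircled{\tiny\#}}}-b^{\tiny{\textcircled{\tiny\#}}}ba^{\tiny{\textcircled{\tiny\#}}}+b^{\tiny{\textcircled{\tiny\#}}}$, which is exactly $y=b^{\pi}a^{\tiny{\textcircled{\tiny\#}}}+b^{\tiny{\textcircled{\tiny\#}}}$.

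The middle equation $yc^{2}=c$ is where the only genuine subtlety lies. Since $ab=0$ we have $c^{2}=a^{2}+ba+b^{2}$, and expanding $yc^{2}$ gives six terms; three of them vanish directly from $a^{\tiny{\textcircled{\tiny\#}}}b=0$ and $b^{\tiny{\textcircled{\tiny\#}}}a=0$, and a fourth simplifies to $b$ via $b^{\tiny{\textcircled{\tiny\#}}}b^{2}=b$. The two survivors are $(1-b^{\tiny{\textcircled{\tiny\#}}}b)a^{\tiny{\textcircled{\tiny\#}}}a^{2}=a-b^{\tiny{\textcircled{\tiny\#}}}ba$ and the leftover cross term $b^{\tiny{\textcircled{\tiny\#}}}ba$. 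Here the point is that nothing in the hypotheses forces $ba=0$, so $b^{\tiny{\textcircled{\tiny\#}}}ba$ cannot be killed on its own; instead it must cancel against the $-b^{\tiny{\textcircled{\tiny\#}}}ba$ coming from $(1-b^{\tiny{\textcircled{\tiny\#}}}b)a$. Checking that these two occurrences genuinely cancel is the crux of the proof and the step I would write out with the most care; once it is done the sum telescopes to $a+b=c$. Having verified all three equations, Theorem \ref{coreinverse1} yields $a+b\in R^{\tiny{\textcircled{\tiny\#}}}$ with $(a+b)^{\tiny{\textcircled{\tiny\#}}}=b^{\pi}a^{\tiny{\textcircled{\tiny\#}}}+b^{\tiny{\textcircled{\tiny\#}}}$, completing the argument.
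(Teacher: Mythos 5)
Your proposal is correct, but it takes a genuinely different route from the paper's. The paper first invokes the additive result for group inverses (Lemma \ref{additive-group}) to get $a+b\in R^{\#}$ together with an explicit $(a+b)^{\#}$, then checks that $x=(1-b^{\tiny{\textcircled{\tiny\#}}}b)a^{\tiny{\textcircled{\tiny\#}}}+b^{\tiny{\textcircled{\tiny\#}}}$ is a $\{1,3\}$-inverse of $a+b$, and finally computes $(a+b)^{\tiny{\textcircled{\tiny\#}}}=(a+b)^{\#}(a+b)(a+b)^{(1,3)}$ via Theorem \ref{coreinverse2}; you instead verify the three equations of Theorem \ref{coreinverse1} directly for the candidate $y=b^{\pi}a^{\tiny{\textcircled{\tiny\#}}}+b^{\tiny{\textcircled{\tiny\#}}}$. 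Your orthogonality relations $a^{\tiny{\textcircled{\tiny\#}}}b=0$, $b^{\tiny{\textcircled{\tiny\#}}}a=0$, $ab^{\tiny{\textcircled{\tiny\#}}}=0$ are the same ones the paper uses (you derive them from the range form of the definition, the paper from the five equations of Lemma \ref{five-equations-yy}; the two derivations are interchangeable), and the computations all check: in $cy^{2}$ the cross terms die because $a^{\tiny{\textcircled{\tiny\#}}}b^{\tiny{\textcircled{\tiny\#}}}=a^{\tiny{\textcircled{\tiny\#}}}b(b^{\tiny{\textcircled{\tiny\#}}})^{2}=0$ and $b^{\tiny{\textcircled{\tiny\#}}}a^{\tiny{\textcircled{\tiny\#}}}=b^{\tiny{\textcircled{\tiny\#}}}a(a^{\tiny{\textcircled{\tiny\#}}})^{2}=0$, and in $yc^{2}$ the surviving $b^{\tiny{\textcircled{\tiny\#}}}ba$ does cancel against the $-b^{\tiny{\textcircled{\tiny\#}}}ba$ produced by $(1-b^{\tiny{\textcircled{\tiny\#}}}b)a^{\tiny{\textcircled{\tiny\#}}}a^{2}$, exactly as you flag. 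What your route buys is self-containedness and brevity: it bypasses the imported group-inverse lemma from \cite{CZW} and establishes existence and the formula in one stroke. What the paper's route buys is that the expression $b^{\pi}a^{\tiny{\textcircled{\tiny\#}}}+b^{\tiny{\textcircled{\tiny\#}}}$ is \emph{derived} from the general formula $a^{\#}aa^{(1,3)}$ rather than guessed and verified; since the statement already supplies the candidate, your verification is entirely adequate.
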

\begin{proof} Suppose $a,b\in R^{\tiny{\textcircled{\tiny\#}}}$, then by Lemma \ref{five-equations}, we have $a,b\in R^\#$ with $a^\#=(a^{\tiny{\textcircled{\tiny\#}}})^{2}a$ and $b^\#=(b^{\tiny{\textcircled{\tiny\#}}})^{2}b$, respectively. Since $ab=0$, $a+b\in R^\#$ by Lemma \ref{additive-group} and
\begin{equation*}
\begin{split}
&~~~~(a+b)^\#\\
&=(1-bb^\#)a^\#+b^\#(1-aa^\#)\\
&=(1-b^\#b)a^\#+b^\#(1-a^\#a)\\
&=(1-(b^{\tiny{\textcircled{\tiny\#}}})^{2}b^{2})(a^{\tiny{\textcircled{\tiny\#}}})^{2}a+
(b^{\tiny{\textcircled{\tiny\#}}})^{2}b(1-(a^{\tiny{\textcircled{\tiny\#}}})^{2}a^{2}).\\
&=(1-b^{\tiny{\textcircled{\tiny\#}}}b)(a^{\tiny{\textcircled{\tiny\#}}})^{2}a+
(b^{\tiny{\textcircled{\tiny\#}}})^{2}b(1-a^{\tiny{\textcircled{\tiny\#}}}a).
\end{split}
\end{equation*}
Since $ab=0$ and $a^{\ast}b=0$, then
\begin{equation*}
\begin{split}
&ab^{\tiny{\textcircled{\tiny\#}}}=ab(b^{\tiny{\textcircled{\tiny\#}}})^{2}=0.\\
&b^{\tiny{\textcircled{\tiny\#}}}a
=b^{\tiny{\textcircled{\tiny\#}}}bb^{\tiny{\textcircled{\tiny\#}}}a
=b^{\tiny{\textcircled{\tiny\#}}}(bb^{\tiny{\textcircled{\tiny\#}}})^{\ast}a
=b^{\tiny{\textcircled{\tiny\#}}}(b^{\tiny{\textcircled{\tiny\#}}})^{\ast}b^{\ast}a
=b^{\tiny{\textcircled{\tiny\#}}}(b^{\tiny{\textcircled{\tiny\#}}})^{\ast}(a^{\ast}b)^{\ast}
=0.\\
&a^{\tiny{\textcircled{\tiny\#}}}b
=a^{\tiny{\textcircled{\tiny\#}}}aa^{\tiny{\textcircled{\tiny\#}}}b
=a^{\tiny{\textcircled{\tiny\#}}}(aa^{\tiny{\textcircled{\tiny\#}}})^{\ast}b
=a^{\tiny{\textcircled{\tiny\#}}}(a^{\tiny{\textcircled{\tiny\#}}})^{\ast}a^{\ast}b
=0.
\end{split}
\end{equation*}
Let $x=(1-b^{\tiny{\textcircled{\tiny\#}}}b)a^{\tiny{\textcircled{\tiny\#}}}+
b^{\tiny{\textcircled{\tiny\#}}}.$ We will show that $x$ is a $\{1,3\}$-inverse of $a+b$.
\begin{equation*}
\begin{split}
&~~~~(a+b)x\\
&=(a+b)[(1-b^{\tiny{\textcircled{\tiny\#}}}b)a^{\tiny{\textcircled{\tiny\#}}}+
b^{\tiny{\textcircled{\tiny\#}}}]\\
&=a(1-b^{\tiny{\textcircled{\tiny\#}}}b)a^{\tiny{\textcircled{\tiny\#}}}+b(1-b^{\tiny{\textcircled{\tiny\#}}}b)a^{\tiny{\textcircled{\tiny\#}}}+
ab^{\tiny{\textcircled{\tiny\#}}}+bb^{\tiny{\textcircled{\tiny\#}}}\\
&=a(1-b^{\tiny{\textcircled{\tiny\#}}}b)a^{\tiny{\textcircled{\tiny\#}}}+bb^{\tiny{\textcircled{\tiny\#}}}\\
&=aa^{\tiny{\textcircled{\tiny\#}}}+bb^{\tiny{\textcircled{\tiny\#}}}.\\
&~~~~(a+b)x(a+b)\\
&=(aa^{\tiny{\textcircled{\tiny\#}}}+bb^{\tiny{\textcircled{\tiny\#}}})(a+b)
= aa^{\tiny{\textcircled{\tiny\#}}}a+aa^{\tiny{\textcircled{\tiny\#}}}b+bb^{\tiny{\textcircled{\tiny\#}}}a+bb^{\tiny{\textcircled{\tiny\#}}}b\\
&=aa^{\tiny{\textcircled{\tiny\#}}}a+bb^{\tiny{\textcircled{\tiny\#}}}b=a+b.
\end{split}
\end{equation*}
Hence by Theorem \ref{coreinverse2}, we have
\begin{equation*}
\begin{split}
&~~~~(a+b)^{\tiny{\textcircled{\tiny\#}}}\\
&=(a+b)^\#(a+b)(a+b)^{(1,3)}\\
&=[(1-b^{\tiny{\textcircled{\tiny\#}}}b)(a^{\tiny{\textcircled{\tiny\#}}})^{2}a+
(b^{\tiny{\textcircled{\tiny\#}}})^{2}b(1-a^{\tiny{\textcircled{\tiny\#}}}a)](a+b)[(1-b^{\tiny{\textcircled{\tiny\#}}}b)a^{\tiny{\textcircled{\tiny\#}}}+
b^{\tiny{\textcircled{\tiny\#}}}]\\
&=[(1-b^{\tiny{\textcircled{\tiny\#}}}b)(a^{\tiny{\textcircled{\tiny\#}}})^{2}a^{2}
+(1-b^{\tiny{\textcircled{\tiny\#}}}b)(a^{\tiny{\textcircled{\tiny\#}}})^{2}ab+
(b^{\tiny{\textcircled{\tiny\#}}})^{2}b(1-a^{\tiny{\textcircled{\tiny\#}}}a)a+
(b^{\tiny{\textcircled{\tiny\#}}})^{2}b\\
&~~~~(1-a^{\tiny{\textcircled{\tiny\#}}}a)b]
[(1-b^{\tiny{\textcircled{\tiny\#}}}b)a^{\tiny{\textcircled{\tiny\#}}}+
b^{\tiny{\textcircled{\tiny\#}}}]\\
&=[(1-b^{\tiny{\textcircled{\tiny\#}}}b)a^{\tiny{\textcircled{\tiny\#}}}a
+b^{\tiny{\textcircled{\tiny\#}}}b][(1-b^{\tiny{\textcircled{\tiny\#}}}b)a^{\tiny{\textcircled{\tiny\#}}}+
b^{\tiny{\textcircled{\tiny\#}}}]\\
&=(1-b^{\tiny{\textcircled{\tiny\#}}}b)a^{\tiny{\textcircled{\tiny\#}}}a(1-b^{\tiny{\textcircled{\tiny\#}}}b)a^{\tiny{\textcircled{\tiny\#}}}
+b^{\tiny{\textcircled{\tiny\#}}}b(1-b^{\tiny{\textcircled{\tiny\#}}}b)a^{\tiny{\textcircled{\tiny\#}}}+
(1-b^{\tiny{\textcircled{\tiny\#}}}b)a^{\tiny{\textcircled{\tiny\#}}}ab^{\tiny{\textcircled{\tiny\#}}}+
b^{\tiny{\textcircled{\tiny\#}}}bb^{\tiny{\textcircled{\tiny\#}}}\\
&=(1-b^{\tiny{\textcircled{\tiny\#}}}b)a^{\tiny{\textcircled{\tiny\#}}}+b^{\tiny{\textcircled{\tiny\#}}}\\
&=b^{\pi}a^{\tiny{\textcircled{\tiny\#}}}+b^{\tiny{\textcircled{\tiny\#}}},
\end{split}
\end{equation*}
where $b^{\pi}=1-b^{\tiny{\textcircled{\tiny\#}}}b$.
\end{proof}

\begin{eg} \label{example2}
The condition $ab=0$ in Theorem $\ref{core-additive1}$ cannot be dropped. Let $R=M_{2}(\mathcal{F})$ with transpose as involution,
where $\mathcal{F}$ is a field. Take
 $a=\left[\begin{smallmatrix}
              1  & 0 \\
              -1 & 0
       \end{smallmatrix}
       \right]$\normalsize
 and
 $b=\left[\begin{smallmatrix}
              -1 & 0 \\
              -1 & 0
       \end{smallmatrix}
       \right]$\normalsize
which satisfy $a^{2}=a$ and $b^{2}=-b$, then $a,b$ is group invertible.
It is easy to see that $a,b\in R^{\{1,3\}}$, thus by Theorem \ref{coreinverse2} $a,b$ is core invertible. Yet,
 $a+b=\left[\begin{smallmatrix}
              0  & 0 \\
              -2 & 0
       \end{smallmatrix}
       \right]$\normalsize
 which satisfies
  $(a+b)^{2}=\left[\begin{smallmatrix}
              0  & 0 \\
              0  & 0
       \end{smallmatrix}
       \right]$\normalsize would imply that $a+b$ is not group invertible and
therefore $a+b$ is not core invertible by  Theorem $\ref{coreinverse2}$.
\end{eg}

If $a,b\in R^{\dagger}$ with $a^{\ast}b=0$ and $ab^{\ast}=0$, then $a+b\in R^{\dagger}$ and $(a+b)^{\dagger}=a^{\dagger}+b^{\dagger}.$
This result was proved by Penrose \cite[Lemma 1]{P} in the complex matrix case, but it is valid for the ring case.
Yet it is not true for core inverse. The counterexample can be found as follows.

\begin{rem} \label{exampleabba}
Let $R=M_{2}(\mathbb{Z}_{4})$ with transpose as involution.
 $a=\left[\begin{smallmatrix}
              -1  & 1 \\
              0   & 0
       \end{smallmatrix}
       \right]$\normalsize
 and
 $b=\left[\begin{smallmatrix}
              0 & 0 \\
              1 & 1
       \end{smallmatrix}
       \right]$\normalsize
$~$in $R$,
which satisfy $a^{2}=-a$ and $b^{2}=b$, then $a,b$ is group invertible.
It is easy to see that $a\in Ra^{\ast}a$ and $b\in Rb^{\ast}b$, thus
$a,b\in R^{\{1,3\}}$, which imply $a,b\in R^{\tiny\textcircled{\tiny\#}}$.
Yet, $a+b\notin R(a+b)^{\ast}(a+b)$, that is $a+b\notin R^{\{1,3\}}$,
thus $a+b\notin R^{\tiny\textcircled{\tiny\#}}$, although we have
$a^{\ast}b=0$ and $ab^{\ast}=0.$  This remark also shows that
the condition $ab=0$ in Theorem $\ref{core-additive1}$ cannot be dropped.
\end{rem}

\begin{rem}
Let $R=M_{2}(\mathcal{F})$ with transpose as involution,
where $\mathcal{F}$ is a field. Take
$a=\left[\begin{smallmatrix}
              1  & 0 \\
              0  & 0
       \end{smallmatrix}
       \right] and $~$
b=\left[\begin{smallmatrix}
              0   & 0 \\
              -1  & 0
       \end{smallmatrix}
       \right]$\normalsize,
then $a,b\in R^{\dagger}$ and $a^{\ast}b=ab=0$, yet
$a+b\notin R(a+b)^{\ast}(a+b)$, that is $a+b\notin R^{\{1,3\}}$,
thus $a+b\notin R^{\tiny{\textcircled{\tiny\#}}}$.
\end{rem}

\begin{cor} \label{core-additive2}
Let $a,b\in R^{\tiny{\textcircled{\tiny\#}}}$ with core inverses $a^{\tiny{\textcircled{\tiny\#}}}$ and $b^{\tiny{\textcircled{\tiny\#}}}$, respectively. If $ab=0=ba$ and $a^{\ast}b=0$, then $a+b\in R^{\tiny{\textcircled{\tiny\#}}}$.\\
Moreover,
$$(a+b)^{\tiny{\textcircled{\tiny\#}}}=a^{\tiny{\textcircled{\tiny\#}}}+b^{\tiny{\textcircled{\tiny\#}}}.$$
\end{cor}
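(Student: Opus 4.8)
The plan is to read this off as a direct specialization of Theorem \ref{core-additive1}. The hypotheses $ab=0$ and $a^{\ast}b=0$ are precisely those required by that theorem, so I would first invoke it verbatim: $a+b\in R^{\tiny{\textcircled{\tiny\#}}}$ and
$$(a+b)^{\tiny{\textcircled{\tiny\#}}}=b^{\pi}a^{\tiny{\textcircled{\tiny\#}}}+b^{\tiny{\textcircled{\tiny\#}}},\qquad b^{\pi}=1-b^{\tiny{\textcircled{\tiny\#}}}b.$$
This already settles the existence claim, so the only thing left is to show that the extra hypothesis $ba=0$ collapses the first summand to $a^{\tiny{\textcircled{\tiny\#}}}$, i.e.\ that $b^{\pi}a^{\tiny{\textcircled{\tiny\#}}}=a^{\tiny{\textcircled{\tiny\#}}}$.

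The key step is to use the fifth defining equation of the core inverse from Lemma \ref{five-equations-yy}, namely $a(a^{\tiny{\textcircled{\tiny\#}}})^{2}=a^{\tiny{\textcircled{\tiny\#}}}$, which exhibits $a^{\tiny{\textcircled{\tiny\#}}}$ as a left multiple of $a$. Multiplying on the left by $b$ and using $ba=0$ gives $b\,a^{\tiny{\textcircled{\tiny\#}}}=ba(a^{\tiny{\textcircled{\tiny\#}}})^{2}=0$. Consequently $b^{\tiny{\textcircled{\tiny\#}}}b\,a^{\tiny{\textcircled{\tiny\#}}}=b^{\tiny{\textcircled{\tiny\#}}}(b\,a^{\tiny{\textcircled{\tiny\#}}})=0$, so that
$$b^{\pi}a^{\tiny{\textcircled{\tiny\#}}}=(1-b^{\tiny{\textcircled{\tiny\#}}}b)a^{\tiny{\textcircled{\tiny\#}}}=a^{\tiny{\textcircled{\tiny\#}}}-b^{\tiny{\textcircled{\tiny\#}}}b\,a^{\tiny{\textcircled{\tiny\#}}}=a^{\tiny{\textcircled{\tiny\#}}}.$$
Substituting this back into the formula from Theorem \ref{core-additive1} yields $(a+b)^{\tiny{\textcircled{\tiny\#}}}=a^{\tiny{\textcircled{\tiny\#}}}+b^{\tiny{\textcircled{\tiny\#}}}$, as required.

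I do not expect any genuine obstacle here: the entire content of the corollary is the observation $b\,a^{\tiny{\textcircled{\tiny\#}}}=0$, which follows from $ba=0$ together with $a^{\tiny{\textcircled{\tiny\#}}}\in aR$. The only point to be careful about is to cite Theorem \ref{core-additive1} for the hard part (existence and the asymmetric formula) rather than re-deriving it, and then to perform this one-line simplification. It is worth noting that the symmetric hypothesis $ab=0=ba$ is needed only to simplify the expression; existence of $(a+b)^{\tiny{\textcircled{\tiny\#}}}$ already requires just $ab=0$ and $a^{\ast}b=0$.
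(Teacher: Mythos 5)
Your proposal is correct and is exactly the argument the paper intends: the corollary is stated as an immediate consequence of Theorem \ref{core-additive1}, with the extra hypothesis $ba=0$ used only to deduce $ba^{\tiny{\textcircled{\tiny\#}}}=ba(a^{\tiny{\textcircled{\tiny\#}}})^{2}=0$ and hence $b^{\pi}a^{\tiny{\textcircled{\tiny\#}}}=a^{\tiny{\textcircled{\tiny\#}}}$. Nothing further is needed.
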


Similarly, we have the following results for dual core inverse.

\begin{thm} \label{dual-core-additive1}
Let $a,b\in R_{\tiny{\textcircled{\tiny\#}}}$ with dual core inverses $a_{\tiny{\textcircled{\tiny\#}}}$ and $b_{\tiny{\textcircled{\tiny\#}}}$, respectively. If $ab=0$ and $ab^{\ast}=0$, then $a+b\in R_{\tiny{\textcircled{\tiny\#}}}$.\\
Moreover,
$$(a+b)_{\tiny{\textcircled{\tiny\#}}}=a_{\tiny{\textcircled{\tiny\#}}}+b_{\tiny{\textcircled{\tiny\#}}}a^{\pi},$$
where $a^{\pi}=1-aa_{\tiny{\textcircled{\tiny\#}}}$.
\end{thm}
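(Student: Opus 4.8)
The plan is to mirror the proof of Theorem \ref{core-additive1}, but worked on the opposite side: instead of building a $\{1,3\}$-inverse and invoking Theorem \ref{coreinverse2}, I would build a $\{1,4\}$-inverse of $a+b$ and invoke Theorem \ref{dualcoreinverse2}. The candidate is $x=a_{\tiny{\textcircled{\tiny\#}}}+b_{\tiny{\textcircled{\tiny\#}}}(1-aa_{\tiny{\textcircled{\tiny\#}}})$, and the whole argument reduces to two annihilation identities together with routine collapsing via the dual five-equation list.

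First I would record the dual analogue of Lemma \ref{five-equations-yy}, obtained by applying Lemma \ref{five-equations-yy} to $a^{\ast}$ and transposing by $\ast$: the dual core inverse satisfies $aa_{\tiny{\textcircled{\tiny\#}}}a=a$, $a_{\tiny{\textcircled{\tiny\#}}}aa_{\tiny{\textcircled{\tiny\#}}}=a_{\tiny{\textcircled{\tiny\#}}}$, $(a_{\tiny{\textcircled{\tiny\#}}}a)^{\ast}=a_{\tiny{\textcircled{\tiny\#}}}a$, $a^{2}a_{\tiny{\textcircled{\tiny\#}}}=a$ and $(a_{\tiny{\textcircled{\tiny\#}}})^{2}a=a_{\tiny{\textcircled{\tiny\#}}}$. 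In the same way the dual of Lemma \ref{five-equations}(1) gives $a,b\in R^{\#}$ with $a^{\#}=a(a_{\tiny{\textcircled{\tiny\#}}})^{2}$ and $b^{\#}=b(b_{\tiny{\textcircled{\tiny\#}}})^{2}$, whence $aa^{\#}=a^{2}(a_{\tiny{\textcircled{\tiny\#}}})^{2}=aa_{\tiny{\textcircled{\tiny\#}}}$ (using $a^{2}a_{\tiny{\textcircled{\tiny\#}}}=a$) and likewise $bb^{\#}=bb_{\tiny{\textcircled{\tiny\#}}}$. Since $ab=0$, Lemma \ref{additive-group} then gives $a+b\in R^{\#}$ with $(a+b)^{\#}=(1-bb^{\#})a^{\#}+b^{\#}(1-aa^{\#})$, which I rewrite in dual-core data.

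The two facts that make everything collapse are $a_{\tiny{\textcircled{\tiny\#}}}b=0$ and $ab_{\tiny{\textcircled{\tiny\#}}}=0$. For the first, $Ra_{\tiny{\textcircled{\tiny\#}}}=Ra$ forces $a_{\tiny{\textcircled{\tiny\#}}}=ta$ for some $t\in R$, so $a_{\tiny{\textcircled{\tiny\#}}}b=t(ab)=0$; for the second, $b_{\tiny{\textcircled{\tiny\#}}}R=b^{\ast}R$ forces $b_{\tiny{\textcircled{\tiny\#}}}=b^{\ast}s$ for some $s\in R$, so $ab_{\tiny{\textcircled{\tiny\#}}}=(ab^{\ast})s=0$ — this is exactly where the hypothesis $ab^{\ast}=0$ enters. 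With these in hand, a short expansion gives $x(a+b)=a_{\tiny{\textcircled{\tiny\#}}}a+b_{\tiny{\textcircled{\tiny\#}}}b$: the term $a_{\tiny{\textcircled{\tiny\#}}}b$ drops, and $b_{\tiny{\textcircled{\tiny\#}}}(1-aa_{\tiny{\textcircled{\tiny\#}}})a=b_{\tiny{\textcircled{\tiny\#}}}a-b_{\tiny{\textcircled{\tiny\#}}}(aa_{\tiny{\textcircled{\tiny\#}}}a)=0$. Since $a_{\tiny{\textcircled{\tiny\#}}}a$ and $b_{\tiny{\textcircled{\tiny\#}}}b$ are self-adjoint, $x(a+b)$ is self-adjoint; and $(a+b)x(a+b)=aa_{\tiny{\textcircled{\tiny\#}}}a+bb_{\tiny{\textcircled{\tiny\#}}}b=a+b$, the cross terms vanishing by $ab_{\tiny{\textcircled{\tiny\#}}}b=0$ and $a_{\tiny{\textcircled{\tiny\#}}}b=0$. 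Thus $x\in(a+b)\{1,4\}$, so $a+b\in R^{\#}\cap R^{\{1,4\}}$, whence $a+b\in R_{\tiny{\textcircled{\tiny\#}}}$ by Theorem \ref{dualcoreinverse2}.

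For the formula, Theorem \ref{dualcoreinverse2} gives $(a+b)_{\tiny{\textcircled{\tiny\#}}}=(a+b)^{(1,4)}(a+b)(a+b)^{\#}=(a_{\tiny{\textcircled{\tiny\#}}}a+b_{\tiny{\textcircled{\tiny\#}}}b)(a+b)^{\#}$. Expanding $(a+b)^{\#}$ into its four products and collapsing repeatedly with $a^{2}(a_{\tiny{\textcircled{\tiny\#}}})^{2}=aa_{\tiny{\textcircled{\tiny\#}}}$, $a_{\tiny{\textcircled{\tiny\#}}}aa_{\tiny{\textcircled{\tiny\#}}}=a_{\tiny{\textcircled{\tiny\#}}}$ and their $b$-analogues, together with $a_{\tiny{\textcircled{\tiny\#}}}b=0$, the two middle products vanish while the remaining two reduce to $a_{\tiny{\textcircled{\tiny\#}}}$ and $b_{\tiny{\textcircled{\tiny\#}}}(1-aa_{\tiny{\textcircled{\tiny\#}}})$, giving $(a+b)_{\tiny{\textcircled{\tiny\#}}}=a_{\tiny{\textcircled{\tiny\#}}}+b_{\tiny{\textcircled{\tiny\#}}}a^{\pi}$ with $a^{\pi}=1-aa_{\tiny{\textcircled{\tiny\#}}}$. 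I expect the only real obstacle to be the bookkeeping in this last expansion; it is purely mechanical, but one must apply the collapsing identities in the correct order and confirm the two middle terms die. Setting up $aa^{\#}=aa_{\tiny{\textcircled{\tiny\#}}}$ and the dual five-equation list at the outset is precisely what renders every later manipulation routine.
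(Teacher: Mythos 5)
The paper offers no separate proof of this theorem --- it is introduced with ``Similarly'' as the dual of Theorem \ref{core-additive1} --- and your proposal is precisely that dualization carried out directly on the $\{1,4\}$ side, so the approach matches the paper's. The argument is correct in structure, but one step is mis-justified: in $(a+b)x(a+b)=(a+b)(a_{\tiny{\textcircled{\tiny\#}}}a+b_{\tiny{\textcircled{\tiny\#}}}b)$ the cross terms are $ab_{\tiny{\textcircled{\tiny\#}}}b$ and $ba_{\tiny{\textcircled{\tiny\#}}}a$, and the second one does \emph{not} vanish because of $a_{\tiny{\textcircled{\tiny\#}}}b=0$ --- the factors are in the wrong order for that identity to apply. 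You actually need a third annihilation fact, $ba_{\tiny{\textcircled{\tiny\#}}}=0$, in addition to the two you list; it follows from the hypotheses just as easily: $ab^{\ast}=0$ gives $ba^{\ast}=(ab^{\ast})^{\ast}=0$, and $a_{\tiny{\textcircled{\tiny\#}}}R=a^{\ast}R$ forces $a_{\tiny{\textcircled{\tiny\#}}}=a^{\ast}u$ for some $u$, whence $ba_{\tiny{\textcircled{\tiny\#}}}=ba^{\ast}u=0$. This mirrors the paper's proof of Theorem \ref{core-additive1}, which establishes three such identities ($ab^{\tiny{\textcircled{\tiny\#}}}=0$, $b^{\tiny{\textcircled{\tiny\#}}}a=0$, $a^{\tiny{\textcircled{\tiny\#}}}b=0$), not two. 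With $ba_{\tiny{\textcircled{\tiny\#}}}=0$ added to your toolkit, the rest --- the $\{1,4\}$-inverse verification, the appeal to Theorem \ref{dualcoreinverse2}, and the final expansion of $(a+b)^{(1,4)}(a+b)(a+b)^{\#}$ into four products of which the two middle ones die --- goes through exactly as you describe.
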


\begin{cor} \label{dual-core-additive2}
Let $a,b\in R_{\tiny{\textcircled{\tiny\#}}}$ with dual core inverses $a_{\tiny{\textcircled{\tiny\#}}}$ and $b_{\tiny{\textcircled{\tiny\#}}}$, respectively. If $ab=0=ba$ and $ab^{\ast}=0$, then $a+b\in R_{\tiny{\textcircled{\tiny\#}}}$.\\
Moreover,
$$(a+b)_{\tiny{\textcircled{\tiny\#}}}=a_{\tiny{\textcircled{\tiny\#}}}+b_{\tiny{\textcircled{\tiny\#}}}.$$
\end{cor}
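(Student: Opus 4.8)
The plan is to obtain this corollary as a direct specialization of Theorem \ref{dual-core-additive1}. First I would observe that the hypotheses $ab=0=ba$ and $ab^{\ast}=0$ in particular yield $ab=0$ and $ab^{\ast}=0$, which are exactly the assumptions of Theorem \ref{dual-core-additive1}. Hence $a+b\in R_{\tiny{\textcircled{\tiny\#}}}$ and
$$(a+b)_{\tiny{\textcircled{\tiny\#}}}=a_{\tiny{\textcircled{\tiny\#}}}+b_{\tiny{\textcircled{\tiny\#}}}a^{\pi},\qquad a^{\pi}=1-aa_{\tiny{\textcircled{\tiny\#}}}.$$
It therefore remains only to use the extra hypothesis $ba=0$ to simplify the second summand, i.e.\ to prove $b_{\tiny{\textcircled{\tiny\#}}}a^{\pi}=b_{\tiny{\textcircled{\tiny\#}}}$.

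The key step is to show $b_{\tiny{\textcircled{\tiny\#}}}a=0$. For this I would invoke the dual analogue of the last equation in Lemma \ref{five-equations-yy}: exactly as Lemma \ref{five-equations-yy} characterizes the core inverse by five equations, passing through the involution (so that $b_{\tiny{\textcircled{\tiny\#}}}=((b^{\ast})^{\tiny{\textcircled{\tiny\#}}})^{\ast}$) yields the corresponding five equations for the dual core inverse, among which is $b_{\tiny{\textcircled{\tiny\#}}}^{2}b=b_{\tiny{\textcircled{\tiny\#}}}$. Combining this identity with $ba=0$ gives
$$b_{\tiny{\textcircled{\tiny\#}}}a=b_{\tiny{\textcircled{\tiny\#}}}^{2}ba=b_{\tiny{\textcircled{\tiny\#}}}^{2}(ba)=0.$$

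From $b_{\tiny{\textcircled{\tiny\#}}}a=0$ we immediately get $b_{\tiny{\textcircled{\tiny\#}}}aa_{\tiny{\textcircled{\tiny\#}}}=0$, so that
$$b_{\tiny{\textcircled{\tiny\#}}}a^{\pi}=b_{\tiny{\textcircled{\tiny\#}}}(1-aa_{\tiny{\textcircled{\tiny\#}}})=b_{\tiny{\textcircled{\tiny\#}}}-b_{\tiny{\textcircled{\tiny\#}}}aa_{\tiny{\textcircled{\tiny\#}}}=b_{\tiny{\textcircled{\tiny\#}}}.$$
Substituting back into the expression from Theorem \ref{dual-core-additive1} then yields $(a+b)_{\tiny{\textcircled{\tiny\#}}}=a_{\tiny{\textcircled{\tiny\#}}}+b_{\tiny{\textcircled{\tiny\#}}}$, as claimed. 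There is no real obstacle here, since the statement is a corollary; the only point requiring a little care is the justification of the identity $b_{\tiny{\textcircled{\tiny\#}}}^{2}b=b_{\tiny{\textcircled{\tiny\#}}}$, which is the dual companion to the equation $ax^{2}=x$ of Lemma \ref{five-equations-yy} and is already implicit in the paper's parallel treatment of the dual core inverse (compare Theorem \ref{dualcoreinverse2} and Proposition \ref{R-decomp-dual-core-inverse}).
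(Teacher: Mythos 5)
Your proposal is correct, and it follows exactly the route the paper intends: the paper states this corollary without proof as an immediate consequence of Theorem \ref{dual-core-additive1}, and your specialization—using $ba=0$ together with the dual identity $(b_{\tiny{\textcircled{\tiny\#}}})^{2}b=b_{\tiny{\textcircled{\tiny\#}}}$ to get $b_{\tiny{\textcircled{\tiny\#}}}a=0$ and hence $b_{\tiny{\textcircled{\tiny\#}}}a^{\pi}=b_{\tiny{\textcircled{\tiny\#}}}$—is precisely the mirror of how Corollary \ref{core-additive2} follows from Theorem \ref{core-additive1}.
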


\vspace{0.2cm} \noindent {\large\bf Acknowledgements}

This research is supported by the National Natural Science Foundation of China (No.11201063 and No.11371089), the Specialized Research Fund for the Doctoral Program of Higher Education (No.20120092110020); the Jiangsu Planned Projects for Postdoctoral Research Funds (No.1501048B); the Natural Science Foundation of Jiangsu Province (No.BK20141327).

\end{document}